\definecolor{darkgreen}{rgb}{0,0.5,0}
\definecolor{darkred}{rgb}{0.5,0,0}
\theoremstyle{plain}
\newtheorem{theorem}{Theorem}
\numberwithin{theorem}{section}
\newtheorem{lemma}[theorem]{Lemma}
\newtheorem{cor}[theorem]{Corollary}
\newtheorem{prop}[theorem]{Proposition}
\newtheorem*{lemma*}{Lemma}
\theoremstyle{definition}
\newtheorem{example}[theorem]{Example}
\newcommand{\N}{\mathbb{N}}
\newcommand{\Z}{\mathbb{Z}}
\newcommand{\Q}{\mathbb{Q}}
\newcommand{\co}{\mathcal{O}_K}
\newcommand{\eps}{\varepsilon}
\newcommand{\disc}{\mathrm{disc}}
\newcommand{\Gal}{\mathrm{Gal}}
\newcommand{\uu}{\eps}
\newcommand{\vv}{\delta}
\newcommand{\numcyclicsporadic}{17}
\newcommand{\numcomplexsporadic}{14}
\newcommand{\NK}{\mathcal{N}_K}
\newcommand{\NKbound}{d \cdot 24^{324(2d+2)} + 2}
\begin{document}

\title[]{Sums of two units in number fields}

\begin{abstract}
Let $K$ be a number field with ring of integers $\co$.  Let $\NK$ be the set of positive integers $n$ such that there exist units $\uu, \vv \in \co^\times$ satisfying $\uu + \vv = n$.  We show that $\NK$ is a finite set if $K$ does not contain any real quadratic subfield.  In the case where $K$ is a cubic field,  we also explicitly classify all solutions to the unit equation $\uu + \vv = n$ when $K$ is either cyclic or has negative discriminant.
\end{abstract}

\author{Magdal\'{e}na Tinkov\'{a}}
\address{Faculty of Information Technology \\ Czech Technical University in Prague \\ Th\'{a}kurova 9 \\ 160 00 Praha 8 \\ Czech Republic}
\email{tinkova.magdalena@gmail.com}

\author{Robin Visser}
\address{Charles University \\ Faculty of Mathematics and Physics \\ Department of Algebra \\ Sokolovsk\'{a} 83 \\ 186 75 Praha 8 \\ Czech Republic}
\email{robin.visser@matfyz.cuni.cz}

\author{Pavlo Yatsyna}
\address{Charles University \\ Faculty of Mathematics and Physics \\ Department of Algebra \\ Sokolovsk\'{a} 83 \\ 186 75 Praha 8 \\ Czech Republic}
\email{p.yatsyna@matfyz.cuni.cz}

\date{\today}
\thanks{M. T. was supported by Czech Science Foundation GA\v{C}R, grant 22-11563O. R. V. and P. Y. were both supported by {Charles University} programme 
PRIMUS/24/SCI/010 and %
{Charles University} programme UNCE/24/SCI/022}
\keywords{Unit equations, sums of units, cyclic cubic fields, complex cubic fields}
\subjclass[2020]{11D61, 11J87, 11R16, 11R27}

\maketitle

\section{Introduction}
Let $K$ be a number field with ring of integers $\co$.  The motivation for the present paper is to provide a classification of the integers $n\in \Z$ and units $\uu,\vv \in \co^\times$ satisfying
\begin{equation} \label{eq:1}
     \uu + \vv = n.
\end{equation}
For any number field $K$, one always has the trivial solutions $1+1 = 2$,  $-1-1=-2$, and $u - u = 0$ for all $u \in \co^\times$. 
A well-known theorem of Siegel \cite{Siegel} asserts that, for any given number field $K$ and nonzero integer $n \in \Z$, there are only finitely many units $\uu, \vv \in \co^\times$ such that $\uu + \vv = n$, with Baker's \cite{Baker} results on linear forms in logarithms providing effective algorithms to compute all such units $(\uu,\vv)$ over any number field $K$.  
In the case where $n = 1$, effectively determining all unit solutions to $\uu + \vv = 1$ (and more generally all $S$-unit solutions) has numerous well-known Diophantine applications, e.g. computing all elliptic curves or hyperelliptic curves over $K$ with good reduction outside a given finite set of primes $S$ \cite{Koutsianas, vonKanel}, computing all solutions to Thue and Thue--Mahler equations \cite{TzanakisDeWeger1, TzanakisDeWeger2}, and computing polynomials with given discriminant \cite{EvertseGyory_disc, Smart}.  See also \cite{AKMRVW, EvertseGyory_unit, EGST, FreitasSiksek} for many other applications to solving unit equations.

For a fixed number field $K$, let $\NK$ denote the set of positive integers $n \in \Z$ such that there exist units $\uu, \vv \in \co^\times$ satisfying $\uu + \vv = n$.  By applying a theorem of Szemer\'edi \cite{Szemeredi}, it is known that $\NK$ is a density zero subset of the positive integers $\N$ (e.g. see \cite[Corollary~6]{JardenNarkiewicz}), with an asymptotic polylogarithmic upper bound shown by Fuchs--Tichy--Ziegler \cite{FuchsTichyZiegler}.  
Newman \cite[p.~89]{Newman} posed the question of determining $\NK$ for prime cyclotomic fields $K = \Q(\zeta_p)$, observing that $\{1, 2, 3\} \subseteq \mathcal{N}_{\Q(\zeta_p)}$ for all primes $p > 3$.  Kostra \cite{Kostra} and Newman \cite{Newman93} independently showed that $kp \nin \mathcal{N}_{\Q(\zeta_p)}$ for all primes $p \geq 3$ and all integers $k \in \Z$.  Jarden--Narkiewicz \cite[p.~331, Problem~C]{JardenNarkiewicz} also posed the general problem of determining an asymptotic formula for $N_2(x) := \# \{n \in \NK \;|\; n \leq x \}$ (see also \cite[p.~532, Problem~43]{Narkiewicz}).  It is clear that $\NK$ is finite for $K = \Q$ and imaginary quadratic fields $K$, however it can also be seen that $\NK$ is infinite for any real quadratic field $K$, e.g. if $\eps_K$ denotes the fundamental unit of $K$, then for all integers $i \geq 1$, we have $\mathrm{Tr}_{K/\Q}(\uu_K^i) \in \NK$, due to the unit solution $\uu_K^i + \overline{\uu}_K^i = \mathrm{Tr}_{K/\Q}(\uu_K^i)$.  

This naturally poses the more general question of classifying all the number fields $K$ for which $\NK$ is finite.  Similarly, we can also ask about classifying the number fields $K$ for which $n \in \NK$ for some fixed positive integer $n$.

\subsection{Case $n=1$} Classifying solutions to the unit equation $\uu + \vv = 1$ (and more generally classifying number fields $K$ such that $1 \in \NK$) is a well-studied problem.  Following the terminology introduced by Nagell \cite{Nagell}, we say a unit $u \in \co^\times$ is \emph{exceptional} if $1-u$ is also a unit in $\co$. In a series of several papers, Nagell \cite{Nagell28, Nagell59, Nagell60, Nagell64, Nagell68, Nagell69} studied and classified the number fields of unit rank zero or one containing such exceptional units.  Ennola \cite{Ennola} identified two infinite families of cubic number fields containing exceptional units. 
Lenstra \cite{Lenstra} constructed new examples of norm-Euclidean fields by proving that if a number field $K$ contains a sufficient number of exceptional units $\eps_1, \dots, \eps_m$ such that all pairwise differences $\eps_i - \eps_j$ are units, then $K$ is norm-Euclidean. Using Lenstra's criterion, Mestre \cite{Mestre}, Leutbecher--Martinet \cite{LeutbecherMartinet}, Leutbecher \cite{Leutbecher}, Leutbecher--Niklasch \cite{LeutbeckerNiklasch}, and Houriet \cite{Houriet} constructed many new examples of norm-Euclidean fields.

Numerous other properties and applications of exceptional units have also been extensively studied by Freitas--Kraus--Siksek \cite{FreitasKrausSiksek}, Louboutin \cite{Louboutin1, Louboutin2}, Niklasch--Smart \cite{NiklaschSmart}, Silverman \cite{Silverman95, Silverman96}, Stewart \cite{Stewart12, Stewart13} and Triantafillou \cite{Triantafillou}.  Further background on exceptional units can be found in \cite[Section~5.5]{EvertseGyory_unit} and the references therein.

\subsection{Case $n=2$} Since $1 + 1 = 2$, we trivially have that $2 \in \NK$ for all number fields $K$.  However, one can instead ask for which number fields $K$ it is possible to express 2 as the sum of two \emph{distinct} units.  A related problem is classifying the number fields $K$ for which every element in $\co$ can be expressed as a finite sum of \emph{distinct} units in $\co^\times$; these are the so-called Distinct Unit Generated (DUG) fields. We remark this is one of the open problems listed in Narkiewicz \cite[p.~530, Problem~18]{Narkiewicz}. Jacobson  \cite{Jacobson} was one of the first to classify such fields, proving that $\Q(\sqrt{2})$ and $\Q(\sqrt{5})$ are DUG fields. Belcher \cite{Belcher} proved that if a number field $K$ has a non-trivial unit solution to $\uu + \vv = 2$ and has the property that all elements in $\co$ can be expressed as finite sums of (not necessarily distinct) units in $\co^\times$, then $K$ is a DUG field.  This criterion has been successfully applied to compute many examples of DUG fields, e.g. see Dombek--Mas\'akov\'a--Ziegler \cite{DombekMasakovaZiegler},  Filipin--Tichy--Ziegler \cite{FilipinTichyZiegler}, Hajdu--Ziegler \cite{HajduZiegler}, and Ziegler \cite{Ziegler}.

For further applications and surveys on additive unit representations in number fields, see also Barroero--Frei--Tichy \cite{BFT}, Filipin--Tichy--Ziegler \cite{FilipinTichyZiegler2}, Frei \cite{Frei}, Jarden--Narkiewicz \cite{JardenNarkiewicz}, and Tichy--Ziegler \cite{TichyZiegler}.  We also refer the reader to \cite[Section~10.2]{EvertseGyory_unit} and the references therein.

\bigskip

The first aim of the present paper is to show that $\NK$ is finite for all number fields $K$ not containing a real quadratic subfield.

\begin{theorem} \label{thm:NKfinite}
    Let $K$ be a degree $d$ number field not containing any real quadratic subfield.  Then $\NK$ is a finite set and satisfies the bound $|\NK| \leq \NKbound$.
\end{theorem}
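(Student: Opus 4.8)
The plan is to reduce the statement to a finiteness result for a single unit equation over a fixed number field, and then bound the number of solutions using known quantitative bounds on the solutions of $\uu+\vv = 1$. Suppose $n \in \NK$, so there exist $\uu, \vv \in \co^\times$ with $\uu + \vv = n$. Dividing through, we have $\uu/n + \vv/n = 1$; but $n$ need not be a unit, so instead I would work with the equation in the normalized form $\frac{\uu}{\vv} + \frac{1}{\vv} = \frac{n}{\vv}$, or better, study the archimedean sizes of $\uu$ and $\vv$ directly. The key observation is that if $n$ is large, then for every embedding $\sigma \colon K \hookrightarrow \C$ we must have $|\sigma(\uu) + \sigma(\vv)| = |n|$ only for the identity-type embeddings landing in $\R$; at a non-real (complex) place or at an embedding where $\uu,\vv$ are small, the triangle inequality forces constraints. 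The crucial point where "no real quadratic subfield" enters: if $\NK$ is infinite, then $n \to \infty$ forces $\uu$ (hence $\vv = n - \uu$) to have an embedding of absolute value $\to \infty$, and by the product formula some other embedding must shrink to $0$; chasing which embeddings can be large and which small, together with the action of $\Gal$ on places, one shows $\uu$ must generate (a power of a unit in) a real quadratic subfield of $K$, contradicting the hypothesis.

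Concretely, I would argue as follows. Fix a system of embeddings and let $H(\uu)$ denote the absolute multiplicative Weil height. Since $\vv = n - \uu$, one gets $H(\uu) \leq 2\, |n| \, H(\uu)^{?}$-type estimates; more usefully, $\uu$ and $n - \uu$ are both units, so $\uu$ is an \emph{exceptional-type} element relative to the integer $n$. Rescaling, set $\xi = \uu/\vv$; then $\xi$ and $\xi + 1$ are both units times the rational number $n/\vv$... this is getting circular, so instead the cleaner route is: $\frac{\uu}{n} + \frac{\vv}{n} = 1$ is an $S$-unit equation over $K$ where $S$ is the set of places above the primes dividing $n$. The number of solutions of a two-variable $S$-unit equation $x + y = 1$ is bounded by Evertse's theorem in terms of $|S|$ only, but here $|S|$ grows with $n$, so that alone does not suffice. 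The extra input is that $\uu,\vv$ are actual units, not just $S$-units: at every finite place $v$, $|\uu|_v = |\vv|_v = 1$, so $|n|_v = |\uu + \vv|_v \leq 1$, which is automatic, but $\uu \equiv -\vv \pmod{\mathfrak{p}}$ for every $\mathfrak{p} \mid n$ — this congruence condition is what we leverage: modulo $n$, both units are $\pm$ each other, so $\uu/\vv \equiv -1$, i.e. $\uu/\vv$ is a unit congruent to $-1$ modulo every prime above $n$, forcing $\uu/\vv$ to lie in a thin set unless $n$ is bounded.

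I expect the main obstacle, and the technical heart, to be the following: ruling out that a "large" unit $\uu$ can exist without forcing a real quadratic subfield. Here is the mechanism I would make precise. By the box principle among the $d$ archimedean absolute values $|\sigma(\uu)|$, and using $|\prod_\sigma \sigma(\uu)| = 1$, if $\max_\sigma |\sigma(\uu)|$ is very large (which it must be, since $|\uu| + |\vv| \geq |\uu + \vv| = n$ at the real places and the heights of $\uu$ control $n$ up to bounded factors), then $\uu$ has at least one large and at least one small conjugate. Now $\vv = n - \uu$: at an embedding $\sigma$ where $\sigma(\uu)$ is large, $\sigma(\vv) \approx -\sigma(\uu)$ is also large; at an embedding where $\sigma(\uu)$ is small, $\sigma(\vv) \approx n$. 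Feeding this back into $\prod_\sigma \sigma(\vv) = \pm 1$ gives a second set of size constraints on the \emph{same} collection of embeddings. Combining the two — large conjugates of $\uu$ coincide with large conjugates of $\vv$, and these are matched against products equalling $1$ — pins down that the "large/small" pattern is supported on a set of embeddings stable under complex conjugation of size exactly $2$, and that the two embeddings in it are \emph{real} (a complex place contributes two equal absolute values and the counting fails otherwise). That forces $\uu$ to lie, up to bounded-index considerations and a bounded power, in the subfield fixed by the stabiliser of that pair, which is a real quadratic subfield of $K$. Quantitatively, to extract the explicit bound $|\NK| \leq \NKbound$, I would combine an explicit height bound for solutions of $\uu + \vv = 1$ over a degree-$d$ field (of the shape $H \leq C(d)$ with $C(d)$ doubly exponential in $d$, e.g. via the explicit $S$-unit bounds of Győry or Bugeaud--Győry) with the fact that each $n \in \NK$ with $n$ exceeding a threshold $n_0(d) = d \cdot 24^{324(2d+2)}$ is excluded by the argument above, so that $\NK \subseteq \{1, 2, \dots, n_0(d) + 1\}$; the precise exponent $324(2d+2)$ will come from tracking the constants in the linear-forms-in-logarithms estimate underlying that height bound, and checking it is the step I would treat most carefully.
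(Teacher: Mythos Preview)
Your proposal has a genuine gap at its core. You try to show that any $n \in \NK$ with $n$ exceeding a threshold $n_0(d)$ depending only on $d$ is excluded, i.e.\ you aim to bound the \emph{elements} of $\NK$ in terms of $d$. That is impossible: for each nonzero integer $a \geq -1$ the complex cubic field $L_a = \Q[x]/(x^3 - ax^2 - 1)$ (which, having odd degree, contains no real quadratic subfield) satisfies $a \in \mathcal{N}_{L_a}$, via $\omega_a + (a - \omega_a) = a$. So among degree-$3$ fields alone the maximum of $\NK$ is unbounded. The theorem bounds $|\NK|$, not $\max \NK$, and any proof must distinguish these. Your archimedean ``large/small pattern'' analysis consequently cannot force a real quadratic subfield from large $n$; the $L_a$ family is a direct counterexample to that claimed mechanism.

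The paper's route is quite different from anything you outline. From $\uu + \vv = n = \sigma(\uu) + \sigma(\vv)$ for $\sigma$ in the Galois group of the closure $L/\Q$, one divides by $\sigma(\vv)$ to obtain the \emph{three}-term unit equation
\[
\frac{\uu}{\sigma(\vv)} + \frac{\vv}{\sigma(\vv)} - \frac{\sigma(\uu)}{\sigma(\vv)} = 1,
\]
whose left-hand entries lie in a subgroup of $(L^\times)^3$ of rank $2r \leq 2(d-1)$ independent of $n$. The Amoroso--Viada bound then caps the \emph{number} of non-degenerate solution triples by $24^{324(2r+4)}$; each such triple $(\alpha,\beta,\gamma)$ determines $n$ via $n^{[L:\Q]} = \mathrm{N}_{L/\Q}(\alpha+\beta)$, and there are at most $d$ distinct $\sigma|_K$, giving the $d \cdot 24^{324(2d+2)}$ term. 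The ``no real quadratic subfield'' hypothesis enters only in the degenerate case (some term equals $1$ for every $\sigma$), which forces $[\Q(\vv):\Q] \leq 2$ and hence $n \in \{1,2\}$. Note in particular that the constant $24^{324(\cdot)}$ comes from a \emph{solution-counting} theorem for unit equations in several unknowns, not from any Baker-type height bound; the paper's proof is explicitly ineffective in $n$, and your proposed source of the exponent (``tracking the constants in the linear-forms-in-logarithms estimate'') is not where it arises.
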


Our proof of this theorem makes essential use of the finiteness of solutions to a unit equation in several unknowns, and thus these methods cannot effectively determine the set $\NK$.  Nevertheless, in the particular case where $K$ is a cubic number field, we can explicitly classify all unit solutions to equation (\ref{eq:1}) over all cyclic cubic fields $K$ and all complex cubic fields $K$, for all integers $n$.  We recall a cubic field $K$ is \emph{cyclic} if $\mathrm{Gal}(K/\Q) \cong C_3$ (equivalently, if $K/\Q$ is abelian), and \emph{complex} if $K$ has signature $(1,1)$ (equivalently, if the discriminant of $K$ is negative).

\subsection{Cyclic cubic fields} A particular well-known family of cyclic cubic number fields are the simplest cubic fields $K_a$, famously studied by Shanks \cite{S}. For each integer $a \in \Z$, we define $K_a = \Q(\rho_a)$ where $\rho_a$ is a root of the cubic polynomial $f_a(x) := x^3 - ax^2 - (a+3)x - 1$.  It can be shown that both $\rho_a$ and $\rho_a + 1$ are units in $\co^\times$, and thus the equation (\ref{eq:1}) always has the trivial solution $-\rho_a + (\rho_a + 1) = 1$ for any integer $a \in \Z$.  We note that $K_a = K_{-a-3}$, thus we can without loss of generality restrict to the case $a \geq -1$.
We also remark that all pairs of integers $(a, b)$ such that $K_a \cong K_b$ have been classified by Hoshi \cite[p.~2137]{Hoshi}; in particular we have the isomorphisms $K_{-1} = K_5 = K_{12} = K_{1259}$, $K_0 = K_3 = K_{54}$, $K_1 = K_{66}$, and $K_{2} = K_{2389}$.

Recently, Vukusic--Ziegler \cite{VZ} showed that in the family of Shanks' simplest cubics, a solution to equation (\ref{eq:1}) for $\uu,\vv \in \Z[\rho_a]^\times$ is either one of the above-mentioned  trivial solutions or one of finitely many sporadic solutions, assuming $n \leq \max(|a|^{1/3}, 1)$.  They conjectured that this bound on $n$ is not a necessary assumption. We extend their results by proving the following general classification of solutions to the unit equation $\uu + \vv = n$ over all cyclic cubic number fields $K$.

\begin{theorem} \label{thm:maincyclic}
Let $K$ be a cyclic cubic number field, $n \in \mathbb{Z}$, and $\uu, \vv \in \co^\times$ such that $\uu + \vv = n$. Then exactly one of the following three cases holds:
\begin{enumerate}
    \item Either $n = 0$, or $(\uu,\vv) = (1,1)$ or $(\uu,\vv) = (-1,-1)$.

    \item $K = K_a$ for some integer $a \geq -1$, $n  = \pm 1$, and either $(\uu, \vv) = (-n \sigma(\rho_a), n \sigma(\rho_a + 1))$ or $(\uu, \vv) = (-n \sigma(\rho_a + 1), n \sigma(\rho_a))$ for some $\sigma \in \mathrm{Gal}(K/\Q)$.

    \item $K = K_a$ for some $a \in \{-1, 0, +1\}$ and $(\uu, \vv)$ is equivalent to one of the $\numcyclicsporadic$ sporadic solutions listed in Table \ref{tab:cyclicsolutions}.
\end{enumerate}
In particular, $\NK$ is finite for all cyclic cubic fields $K$, and is given by
\begin{equation*}
    \NK = \begin{cases}
        \{ 1, 2, 3, 4, 5, 19, 22 \} & \text{if } K = K_{-1}, \\
        \{1, 2, 3 \}  & \text{if } K = K_{0}, \\
        \{1, 2, 5, 7 \}  & \text{if } K = K_{1} , \\
        \{1, 2\} & \text{if } K \text{simplest cubic}, K \nin \{K_{-1}, K_0, K_1\}, \\
        \{2\}   & \text{otherwise}. 
    \end{cases}
\end{equation*}

\end{theorem}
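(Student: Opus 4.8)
The plan is to separate the problem into the three structural cases asserted in the theorem and deduce the description of $\NK$ from them. First I would handle the degenerate situation: if $n = 0$ then $\vv = -\uu$, so $0 \notin \NK$ contributes nothing; the trivial solutions $(\uu,\vv) = (\pm 1, \pm 1)$ give $n = \pm 2$, accounting for $2 \in \NK$ always. So the content is entirely in cases (2) and (3), which together must exhaust all nontrivial solutions. For case (2), since $\rho_a$ and $\rho_a + 1$ are both units with $-\rho_a + (\rho_a+1) = 1$, applying any $\sigma \in \Gal(K/\Q)$ and multiplying by $\pm 1$ produces solutions with $n = \pm 1$, i.e. $1 \in \NK$ for every simplest cubic $K_a$. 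For case (3), one reads off from Table~\ref{tab:cyclicsolutions} the values of $|n|$ occurring for each of $K_{-1}, K_0, K_1$, which (together with $n = 1, 2$ from the previous cases) is exactly how one arrives at $\{1,2,3,4,5,19,22\}$, $\{1,2,3\}$, and $\{1,2,5,7\}$ respectively.

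The main work, then, is establishing the trichotomy itself. Here I would proceed as follows. Let $\sigma$ generate $\Gal(K/\Q) \cong C_3$. Given units $\uu, \vv$ with $\uu + \vv = n \in \Z$, the idea is to pass to a genuine unit equation with sum $1$ by normalising: set $x = \uu/n$, $y = \vv/n$ so that $x + y = 1$, but now $x, y$ lie in $K$ and are only $S$-units for $S$ the primes dividing $n$. To control $S$ we would apply $\sigma$: the three conjugates $\uu, \sigma(\uu), \sigma^2(\uu)$ have product $N_{K/\Q}(\uu) = \pm 1$ and each satisfies $\sigma^i(\uu) + \sigma^i(\vv) = n$, so we obtain a system of unit relations. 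Combining them (for instance considering $\uu \sigma(\uu) \sigma^2(\uu)$ and the various sums) and using the finiteness of solutions to the three-variable unit equation (the tool referenced in the discussion following Theorem~\ref{thm:NKfinite}) should cut down to finitely many possibilities, and then the structure of cyclic cubic fields — in particular the explicit generators of the unit group coming from Shanks' polynomial $f_a$ and the classification of which $K_a$ are isomorphic — lets one identify the solution families precisely. I expect the proof in the paper to reduce, via such a conjugation argument, to a Diophantine inequality bounding $|n|$ (or bounding $|a|$ in terms of $|n|$), after which a finite search over small $a$ and small $n$ produces Table~\ref{tab:cyclicsolutions}; resolving the Vukusic--Ziegler conjecture means precisely removing their hypothesis $n \leq \max(|a|^{1/3},1)$, so the heart of the matter is an unconditional bound of this shape.

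The main obstacle, I anticipate, is exactly this unconditional bound: one must rule out solutions with $n$ large relative to $a$ without any a priori restriction. Naively, a unit $\uu \in \Z[\rho_a]^\times$ has a multiplicative expression $\pm \rho_a^{j} (\rho_a+1)^{k}$ (up to the rank-2 unit group, using that $\{\rho_a, \rho_a+1\}$ generates a finite-index subgroup, often all, of $\co^\times$), and one needs $\uu + \vv = n$ to force the exponents to be small. The analytic input — estimating the archimedean sizes of $\rho_a$ and its conjugates as $a \to \infty$ (one conjugate near $a$, two near $-1$ and near $0$) — gives that a generic product of powers is either very large or very close to a root of unity, and only a bounded range of exponents can sum to a rational integer; making this quantitative and uniform in $a$, and then reconciling it with the finitely many small-$a$ exceptions, is the technical crux. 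Once the trichotomy is in hand, assembling $\NK$ is the routine bookkeeping described in the first paragraph, together with checking that no further collisions among the isomorphic fields $K_{-1} = K_5 = K_{12} = K_{1259}$, etc., introduce spurious values.
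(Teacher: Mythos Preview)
Your plan diverges substantially from the paper's argument, and in its present form it has a real gap.

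The core of your strategy is to normalise to an $S$-unit equation, combine Galois conjugates into a three-term unit equation, and then invoke either the Amoroso--Viada finiteness bound (the tool behind Theorem~\ref{thm:NKfinite}) or archimedean size estimates for the conjugates of $\rho_a$. Neither of these yields what Theorem~\ref{thm:maincyclic} asserts. The three-variable unit equation result is ineffective, so it can only give $|\NK| < \infty$, not the explicit list; indeed the paper itself stresses (in the remarks after Theorem~\ref{thm:NKfinite}) that this route cannot determine $\NK$. Your archimedean alternative---writing $\uu = \pm \rho_a^{j}(\rho_a+1)^{k}$ and estimating sizes---is essentially the Vukusic--Ziegler method, which is exactly what requires the hypothesis $n \le \max(|a|^{1/3},1)$ that the theorem is meant to remove. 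You identify this as ``the technical crux'' but offer no mechanism for handling it; you also implicitly assume $K$ is a simplest cubic with $\co^\times = \langle -1,\rho_a,\rho_a+1\rangle$, whereas the statement covers all cyclic cubic fields.

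What the paper actually does is entirely different and bypasses unit-equation machinery. One writes down the minimal polynomial $f_\vv(x)=x^3+ax^2+bx\pm1$ and observes that $f_\vv(n)=\pm1$ forces $n^3+an^2+bn\in\{0,\pm2\}$. If this quantity is $\pm2$ then $n\mid 2$, and the square-discriminant condition for cyclic cubics pins down finitely many $a$ by elementary completing-the-square arguments. If it is $0$, then $f_\uu$ is parametrised by two integers $(U,V)$ with $n=U$; the condition that $\disc(f_\uu)$ be a perfect square becomes, after a short manipulation, the equation
\[
U^3 - WU^2V - (W+3)UV^2 - V^3 = W^2+3W+9,
\]
a parametrised family of cubic Thue equations over the simplest cubics $K_W$. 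Hoshi's theorem (Corollary~1.6 of \cite{Hoshi}) classifies all integer solutions to this, giving the thirteen pairs $(U,V)$ listed in the paper and hence the sporadic table. No bounds on $n$ in terms of $a$, no linear forms in logarithms, and no unit-group generators are needed; the discriminant constraint does all the work, and the decisive external input is Hoshi's result rather than any finiteness theorem for unit equations.
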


Here, we say that a pair of solutions $(\uu_1, \vv_1)$ and $(\uu_2, \vv_2)$ are \emph{equivalent} if either $(\uu_1, \vv_1) = (\pm \sigma(\uu_2), \pm \sigma(\vv_2))$ or $(\uu_1, \vv_1) = (\pm \sigma(\vv_2), \pm \sigma(\uu_2))$ for some $\sigma \in \mathrm{Gal}(K/\Q)$.

We also mention some recent work of Komatsu \cite{Komatsu} who have also classified all solutions to the unit equation $\uu + \vv = n$ over all cyclic cubic number fields $K$ using similar methods. The sets $\NK$ for each cyclic cubic field $K$ are essentially given in \cite[Theorem~5.1]{Komatsu}.

In particular, our main theorem implies that no solution to $\uu+\vv = n$ exists for any integer $n$ such that $|n| > 22$, over any cubic cyclic field $K$, and we also note that our theorem resolves both Conjecture 1 and Conjecture 2 proposed by Vukusic--Ziegler \cite[p.~717]{VZ}.

Our strategy is to compute the minimal polynomials of all the pairs $(\uu,\vv)$ of solutions to the equation (\ref{eq:1}). By studying the discriminant of these polynomials, we are able to find conditions as to when the corresponding solutions are in cyclic cubic fields. This is transformed into a question about integer solutions to a family of binary cubic equations (and in turn to Thue equations), for which we can apply some results of Hoshi \cite{Hoshi}.

\subsection{Complex cubic fields} We also prove an analogous theorem for complex cubic fields.  For each nonzero integer $a \geq -1$, we define $L_a := \Q(\omega_a)$ where $\omega_a$ is a root of the cubic polynomial $g_a(x) := x^3 - ax^2 - 1$.  This time, we have that both $\omega_a$ and $\omega_a - a$ are units in $\co^\times$, and thus equation (\ref{eq:1}) always has the trivial solution $\omega_a + (-\omega_a + a) = a$ for any nonzero $a \geq -1$.  Analogously to the cyclic cubic fields, we also show that these are the only infinite family of solutions to equation (\ref{eq:1}) amongst the complex cubic fields.

\begin{theorem} \label{thm:maincomplex}
    Let $K$ be a complex cubic number field, $n \in \Z$, and $\uu, \vv \in \co^\times$ such that $\uu + \vv = n$. Then exactly one of the following three cases holds:
    \begin{enumerate}
        \item Either $n = 0$, or $(\uu,\vv) = (1,1)$ or $(\uu,\vv) = (-1,-1)$.

        \item $K = L_a$ for some nonzero integer $a \geq -1$, $n = \pm a$, and either $(\uu, \vv) = (\pm \sigma(\omega_a), \pm \sigma(-\omega_a+a))$ or $(\uu, \vv) = ( \pm \sigma(-\omega_a+a), \pm \sigma(\omega_a))$ for some $\sigma \in \mathrm{Gal}(K/\Q)$.

        \item $K$ is either $L_{-1}$, $L_1$, or $\Q[x]/(x^3 - x^2 - x-1)$, and $(\uu, \vv)$ is equivalent to one of the $\numcomplexsporadic$ sporadic solutions listed in Table \ref{tab:complexsolutions}.
    \end{enumerate}
\end{theorem}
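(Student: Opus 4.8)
The plan is to follow the strategy announced for the cyclic case: compute the minimal polynomial of $\uu$, determine the finitely many constraints imposed on its coefficients by the requirement that both $\uu$ and $n-\uu$ be units, and then use the \emph{sign} of the discriminant — which is exactly what distinguishes complex cubic fields among all cubic fields — to bound the remaining parameters. The key simplification over the cyclic case is that here the discriminant condition is an inequality rather than a "square" condition, so no Thue equation is needed, only an elementary size estimate.

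First I would normalise. The statement is invariant under $(\uu,\vv,n)\mapsto(-\uu,-\vv,-n)$, so we may assume $N_{K/\Q}(\uu)=-1$. Since a cubic field has no subfield strictly between $\Q$ and $K$, if $\uu\in\Q$ then $\uu\in\{\pm1\}$ and $\vv=n-\uu$ either lies in $\Q$ (forcing $\vv\in\{\pm1\}$, hence $n\in\{0,\pm2\}$ and case~(1)) or generates $K$, which is absurd since $\vv=n-\uu\in\Z$. So we may assume $\uu$ and $\vv$ both generate $K$, and write the minimal polynomial of $\uu$ as $f(x)=x^3+px^2+qx+1\in\Z[x]$ (the constant term is $1$ because $N_{K/\Q}(\uu)=-1$). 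As $K$ is complex cubic, $\disc(f)$ has the sign of the field discriminant, so $\disc(f)<0$; and $f(n)=N_{K/\Q}(n-\uu)=N_{K/\Q}(\vv)\in\{\pm1\}$.

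I would then split on $f(n)\in\{1,-1\}$. If $f(n)=-1$, then $n(n^2+pn+q)=-2$ forces $n\in\{\pm1,\pm2\}$; for each such $n$ the coefficient $q$ is an explicit affine function of $p$, and substituting into $\disc(f)$ yields a degree-$4$ polynomial in $p$ with leading coefficient $n^2>0$, so $\disc(f)<0$ holds for only finitely many $p$, and a short search lists the resulting sporadic solutions. If $f(n)=1$, then $n^3+pn^2+qn=0$ forces $q=-n(n+p)$, so
\[ f(x)=x^3+px^2-n(n+p)x+1, \qquad \disc(f)=\bigl(n(n+p)(p+2n)\bigr)^2-\bigl(4p^3+18p^2n+18pn^2+27\bigr). \]
If $p=-2n$, then $f(x)=x^3-2nx^2+n^2x+1=-g_n(n-x)$, so $\uu=n-\omega_n$, $\vv=\omega_n$, $K\cong L_n$, and $\disc(f)=-(4n^3+27)<0$ exactly for $n\geq-1$: this is the case-(2) solution with $a=n$. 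If $p=-n$, then $f(x)=x^3-nx^2+1=-g_{-n}(-x)$, so $\uu=-\omega_{-n}$, $\vv=\omega_{-n}+n$, $K\cong L_{-n}$, and $\disc(f)=4n^3-27<0$ exactly for $n\leq1$: this is the case-(2) solution with $a=-n$. (Outside these ranges the polynomials have positive discriminant and so cannot be the minimal polynomial of $\uu$ in a complex cubic $K$.)

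The remaining task — which I expect to be the crux of the bookkeeping — is to bound the $f(n)=1$ solutions lying off the lines $p\in\{-2n,-n\}$. Writing $k=p+2n$ (so $n+p=k-n$), these lines are $k=0$ and $k=n$, and $\disc(f)=n^2k^2(k-n)^2-(4k^3-6k^2n-6kn^2+4n^3+27)$. For $k\notin\{0,n\}$ the integers $|n|,|k|,|k-n|$ are each at least $1$, so $\disc(f)<0$ forces $n^2k^2(k-n)^2<47\max(|n|,|k|)^3$; since the left side is symmetric in $n$ and $k$ we may take $|n|\leq|k|$, and then the cases $|k|\geq2|n|$ (where $|k-n|\geq|k|/2$, forcing $|k|<188$) and $|n|\leq|k|<2|n|$ (where the left side is $\geq n^4$, forcing $|n|<376$) leave only an explicit finite set of $(n,p)$. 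A direct enumeration of this set, together with the finitely many pairs from the case $f(n)=-1$ and their images under the equivalence $(\uu,\vv)\mapsto(\pm\sigma(\uu),\pm\sigma(\vv))$ and $(\uu,\vv)\mapsto(\pm\sigma(\vv),\pm\sigma(\uu))$, should recover precisely the $\numcomplexsporadic$ solutions of Table~\ref{tab:complexsolutions}, each lying in $L_{-1}$, $L_1$, or $\Q[x]/(x^3-x^2-x-1)$. No serious Diophantine difficulty arises: away from the trivial locus the degree-$6$ term of $\disc(f)$ dominates the degree-$3$ term, so the inequality $\disc(f)<0$ by itself confines the parameters; the real work is verifying the discriminant identities, tracking the sign cases, and carrying out the finite search carefully enough that no sporadic solution is missed.
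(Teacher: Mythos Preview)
Your proposal is correct and follows essentially the same route as the paper: write down the minimal polynomial $f$ of one unit, split on whether $f(n)+1$ is $0$ or $\pm2$, and in each branch use the condition $\disc(f)<0$ to reduce to a finite search, with the special locus in the ``zero'' branch yielding the $L_a$ family. The paper parametrizes the zero branch by $(U,V)=(n,-n_\vv)$ and isolates the single line $V=-U$ (having first normalised $n>0$), whereas your normalisation $N(\uu)=-1$ without fixing the sign of $n$ produces the two lines $p=-2n$ and $p=-n$; these are the same family seen from the two signs of $n$. Your bound $|n|<376$ is much looser than the paper's (Lemma~6.2 gets $U\le4$ by locating the local minima of the quartic $F_U$), but it is a valid finite bound. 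One small omission: you should explicitly dispose of $n=0$ before writing $q=-n(n+p)$ in the $f(n)=1$ branch, since for $n=0$ that equation does not determine $q$.
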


In particular, an explicit computation of the set $\NK$ for some complex cubic fields $K$ can be given by
\begin{equation} \label{eq:NKcomplex}
    \NK = 
    \begin{cases}
           \{1, 2, 3, 4\} & \text{if } K = L_{-1}, \\
           \{1, 2, 3, 67 \}   & \text{if } K = L_1, \\
           \{2, a\}  & \text{if } K = L_a \text{ for some positive } a \leq 1000 \text{ with } a \nin \{1, 67\}, \\
           \{2\} & \text{if } K \neq  L_a \text{ for all } a \geq -1 .
    \end{cases}
\end{equation}

The paper is structured as follows.   In Section~\ref{sec:nkbounds}, we give a proof of Theorem~\ref{thm:NKfinite} using known bounds on the number of solutions to unit equations in three unknowns. 
In Section~\ref{sec:minpoly}, we derive general expressions for the possible minimal polynomials of units $\uu, \vv \in \co^\times$ which satisfy $\uu + \vv = n$ over some cubic number field $K$.  In Section~\ref{sec:disc}, we recall some basic properties about discriminants of algebraic integers and explicitly compute the discriminants of the minimal polynomials derived in the previous section.  In Section~\ref{sec:cyclicubic}, we give a proof of Theorem~\ref{thm:maincyclic}, applying a deep result of Hoshi \cite{Hoshi}.  In Section~\ref{sec:complexcubic}, we give a proof of Theorem~\ref{thm:maincomplex}. Finally in Section~\ref{sec:openproblems}, we pose some open problems about the sets $\NK$.

\bigskip
We are very grateful to Volker Ziegler for valuable discussions and for communicating to us a proof of the finiteness of $\NK$. We thank Ingrid Vukusic for helpful comments on an earlier draft of this paper. We also thank the referees for a careful reading of the paper and for many useful comments, including a suggestion which significantly improved the upper bound in Theorem~\ref{thm:NKfinite} compared to an earlier draft of this paper.

\section{Finiteness of $\NK$} \label{sec:nkbounds}

In this section, we shall prove that $\NK$ is finite for all number fields $K$ not containing any real quadratic subfield.   For completeness, we first consider the trivial case where the unit group of $K$ has rank zero.

\begin{lemma} \label{lem:easyNK}
    If $K = \Q$, then $\NK = \{2\}$, and if $K$ is an imaginary quadratic field, then
    \begin{equation*}
        \NK = \begin{cases}
            \{1, 2\} &\text{if } K = \Q(\sqrt{-3}), \\
            \{2\} &\text{otherwise. }\\
        \end{cases}
    \end{equation*}
\end{lemma}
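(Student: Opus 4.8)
The plan is to exploit the fact that $\co^\times$ is finite in precisely these two unit-rank-zero cases, so equation (\ref{eq:1}) has only finitely many $(\uu,\vv)$ to check directly. First I would recall that for $K = \Q$ we have $\co^\times = \{\pm 1\}$, so the only possible values of $\uu + \vv$ are $2$, $0$, and $-2$; since $\NK$ consists of \emph{positive} integers, this immediately gives $\NK = \{2\}$, as $0$ is excluded and $-2 < 0$.

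For the imaginary quadratic case, the key input is the classical determination of the unit group of an imaginary quadratic ring of integers: $\co^\times = \{\pm 1\}$ unless $K = \Q(\sqrt{-1})$ (where $\co^\times = \{\pm 1, \pm i\}$) or $K = \Q(\sqrt{-3})$ (where $\co^\times = \{\pm 1, \pm \zeta_6, \pm \zeta_6^2\}$ is the group of sixth roots of unity). I would then run through these three subcases. When $\co^\times = \{\pm 1\}$, the argument is identical to the $\Q$ case, giving $\NK = \{2\}$. For $K = \Q(\sqrt{-1})$, I would enumerate sums $\uu + \vv$ with $\uu, \vv \in \{\pm 1, \pm i\}$; the only way to land in $\Z$ is for the imaginary parts to cancel, and a quick check shows the only positive integer obtained is $2$ (from $1+1$), since $1 + i$, $i + i$, etc.\ are non-real, and $i + (-i) = 0$ is excluded. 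For $K = \Q(\sqrt{-3})$, I would similarly enumerate sums of sixth roots of unity; here the crucial new solution is $\zeta_6 + \overline{\zeta_6} = 2\cos(\pi/3) = 1$, giving $1 \in \NK$, together with $1+1 = 2$, while no other pair produces a positive integer other than $1$ or $2$.

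The only mild obstacle is the finite casework for $\Q(\sqrt{-3})$: one must verify that among all $36$ ordered pairs of sixth roots of unity, the only real positive values of $\uu + \vv$ are $1$ and $2$, and in particular that $3$ does not arise. This follows since $|\uu + \vv| \leq 2$ with equality only when $\uu = \vv$ is real, so no sum can reach $3$; the value $1$ arises exactly from conjugate primitive sixth roots summing to their trace. Since all these unit groups are explicit finite sets, the entire verification is a short direct computation requiring no further theory.
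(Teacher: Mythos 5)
Your proposal is correct and takes essentially the same approach as the paper: both exploit that the unit groups are finite in these rank-zero cases (with the special cases $\Q(\sqrt{-1})$ and $\Q(\sqrt{-3})$ treated separately) and verify by direct enumeration that the only positive-integer sums are $2$, together with $1 \in \mathcal{N}_{\Q(\sqrt{-3})}$ arising from the sum of the two primitive sixth roots of unity. Your triangle-inequality observation ruling out sums $\geq 3$ is a slightly cleaner way of organizing the finite check that the paper leaves implicit, but it is not a different method.
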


\begin{proof}
    Clearly if $K = \Q$, then $\co^\times = \{ \pm 1\}$ and so $\NK = \{2\}$.  If $K = \Q(\sqrt{-3})$, then $\co^\times = \{\pm \omega, \pm \omega^2, \pm 1\}$ where $\omega = \frac{1 + \sqrt{-3}}{2}$.  A standard check gives the only solutions to (\ref{eq:1}) are equivalent to $\omega - \omega^2 = 1$ or the trivial solution $1 + 1 = 2$, thus $\NK = \{1, 2\}$.  If $K = \Q(\sqrt{-1})$, then $\co^\times = \{\pm \sqrt{-1}, \pm 1\}$, where we easily observe that only the trivial solution $1 + 1 = 2$ exists.  Otherwise, if $K$ is any other imaginary field, then $\co^\times = \{ \pm 1\}$ and so again only the trivial solution $1+1 = 2$ exists.
\end{proof}

Whilst $\NK$ is infinite for real quadratic fields $K$, we can still give a complete classification of the set $\NK$ in terms of the solutions to certain Pell equations.

\begin{prop}
    Let $K = \Q(\sqrt{D})$ be a real quadratic field, with $D \geq 2$ a positive squarefree integer. Then
    \begin{equation} \label{eq:NKrealquad}
        \NK = \{ x \in \Z_{\geq 1} \;|\; x^2 - Dy^2 = \pm 4 \text{ for some } y \in \Z \} .
    \end{equation}
    In particular, for all $n \neq 2$, $n \in \NK$ if and only if $K = \Q(\sqrt{n^2 \pm 4})$.
\end{prop}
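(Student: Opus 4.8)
The plan is to prove the identity (\ref{eq:NKrealquad}) by a double inclusion, using the explicit description of units in a real quadratic field. First I would recall that for $K = \Q(\sqrt{D})$ with $D \geq 2$ squarefree, an element $u \in \co^\times$ has $\mathrm{Nm}_{K/\Q}(u) = \pm 1$, and writing $2u = a + b\sqrt{D}$ with $a, b \in \Z$ of the same parity (and $a \equiv b D \pmod 2$ when $D \equiv 1 \pmod 4$), the norm condition becomes $a^2 - D b^2 = \pm 4$. Conversely every integer solution of $a^2 - Db^2 = \pm 4$ with $a$ of the correct parity gives a unit $u = \tfrac{a + b\sqrt{D}}{2}$.

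For the inclusion $\NK \subseteq \{x \geq 1 : x^2 - Dy^2 = \pm 4\}$: suppose $n \in \NK$, so $\uu + \vv = n$ for units $\uu, \vv$. Apply the nontrivial automorphism $\sigma$ of $K$. If $\vv \neq \sigma(\uu)$, I would argue as follows: $\uu$ and $\vv = n - \uu$ are both units, so $\mathrm{Nm}(\uu) = \pm 1$ and $\mathrm{Nm}(n - \uu) = (n - \uu)(n - \sigma(\uu)) = n^2 - n\,\mathrm{Tr}(\uu) + \mathrm{Nm}(\uu) = \pm 1$. Thus $n^2 - n\,\mathrm{Tr}(\uu) \in \{0, -2, 2\}$, i.e. $n \mid 2$, forcing $n \in \{1, 2\}$; and $n = 1, 2$ indeed lie in the right-hand set since $x = 1, y = 1$ gives... actually one should check: $1 - D \cdot 0 \neq \pm 4$ in general, so the case $n=1$ with $\vv \neq \sigma(\uu)$ cannot occur unless it also arises the other way. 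The cleaner route is: write $2\uu = a + b\sqrt D$; then $n = \uu + \vv$. If we take $\vv = \sigma(\uu)$ (the "generic" sum-of-conjugates solution), then $n = \mathrm{Tr}(\uu) = a$ and $a^2 - Db^2 = 4\,\mathrm{Nm}(\uu) = \pm 4$, so $n = a$ lies in the right-hand side. Handling a general pair $(\uu, \vv)$ requires showing any other configuration forces $n \in \{2\}$ (the trivial solution $1+1$): one uses $\mathrm{Tr}(\uu) + \mathrm{Tr}(\vv) = 2n$ together with the norm constraints on $\uu, \vv, \uu+\vv$ to pin down the finitely many exceptional cases, and notes $2 = 2^2 - D\cdot 0^2 + \dots$ — here $2$ needs its own justification since $4 - D y^2 = \pm 4$ has the solution $y = 0$ giving $x^2 = 4$ or $x^2 = 8$; so $x = 2$ works via $2^2 - D\cdot 0^2 = 4$. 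Good, so $2$ is always in the right-hand set.

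For the reverse inclusion: if $x \geq 1$ satisfies $x^2 - Dy^2 = \pm 4$, set $u = \tfrac{x + y\sqrt D}{2}$ and $\bar u = \sigma(u) = \tfrac{x - y\sqrt D}{2}$. Then $u \bar u = \tfrac{x^2 - D y^2}{4} = \pm 1$, so both $u$ and $\bar u$ are algebraic integers of norm $\pm 1$, hence units; and $u + \bar u = x$, exhibiting $x \in \NK$. (One must check $u \in \co$: $x^2 - D y^2 = \pm 4$ forces $x \equiv y \sqrt{D}$-compatible parity, precisely the condition for $\tfrac{x+y\sqrt D}{2}$ to be an algebraic integer — if $D \equiv 2, 3 \pmod 4$ then $x, y$ are both even and $u = \tfrac x2 + \tfrac y2 \sqrt D \in \Z[\sqrt D]$; if $D \equiv 1 \pmod 4$ then $x \equiv y \pmod 2$ and $u \in \Z[\tfrac{1+\sqrt D}{2}]$.) Finally, for the last sentence: if $n \neq 2$ and $n \in \NK$, then by the first inclusion $n^2 - Dy^2 = \pm 4$ for some $y \neq 0$ (since $y = 0$ gives only $n = 2$), so $D y^2 = n^2 \mp 4$, and as $D$ is squarefree this forces $\Q(\sqrt D) = \Q(\sqrt{n^2 \mp 4})$; conversely if $K = \Q(\sqrt{n^2 \pm 4})$ then $\sqrt{n^2 \pm 4} \in K$, write $n^2 \pm 4 = D m^2$, and $n^2 - D m^2 = \mp 4$ puts $n$ in the right-hand set.

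The main obstacle I anticipate is the bookkeeping in the forward inclusion $\NK \subseteq (\ref{eq:NKrealquad})$: one must verify that \emph{every} unit solution $\uu + \vv = n$ — not just the "conjugate pair" solutions $u + \sigma(u)$ — yields an $n$ on the right-hand side. The key algebraic fact that makes this work is that in a quadratic field $n - \uu$ is automatically conjugate-related to $\uu$ up to the choice of sign in the norm: from $\mathrm{Nm}(n - \uu) = \pm 1$ one deduces $\mathrm{Tr}(\uu) \in \{n - \tfrac{1 \mp 1}{n}, \dots\}$, and a short case analysis on $n \mid (1 \pm 1)$ reduces everything either to $\vv = \pm\sigma(\uu)$ (giving $n = \pm\mathrm{Tr}(\uu)$, hence $n^2 - Db^2 = \pm 4$ after clearing denominators) or to the trivial solutions with $n \in \{1, 2\}$, both of which are easily checked to lie in the right-hand set. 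Once this case analysis is laid out cleanly, the rest is routine.
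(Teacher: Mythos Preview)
Your overall approach matches the paper's: from $\mathrm{Nm}_{K/\Q}(n-\uu) = n^2 - n\,\mathrm{Tr}(\uu) + \mathrm{Nm}(\uu) = \pm 1$ you get $n^2 - n\,\mathrm{Tr}(\uu) \in \{0, \pm 2\}$ and split into cases; the case $=0$ gives $n = \mathrm{Tr}(\uu)$ and the conjugate-pair representation, and the case $=\pm 2$ forces $n \mid 2$. Your reverse inclusion and the ``in particular'' clause are fine.

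There is, however, a genuine gap in the $n=1$ subcase of the forward inclusion. You assert at the end that ``the trivial solutions with $n \in \{1, 2\}$\ldots are easily checked to lie in the right-hand set,'' but this is false for $n=1$: the equation $1 - Dy^2 = \pm 4$ has an integer solution only when $D = 5$. So it is \emph{not} automatic that $1$ lies in the right-hand side of (\ref{eq:NKrealquad}); you must show that if $\uu + \vv = 1$ with $\uu, \vv \in \co^\times \setminus \{\pm 1\}$ then necessarily $K = \Q(\sqrt 5)$. The paper carries this out: from $1 - a = \pm 2$ (where $a = \mathrm{Tr}(\uu)$) one gets $a \in \{-1, 3\}$; examining the minimal polynomial $x^2 - ax \pm 1$ in each case, together with the constraint that $1 - \uu$ is also a unit, forces $\uu \in \Q(\sqrt 5)$, and then $(x,y) = (1,1)$ solves $x^2 - 5y^2 = -4$. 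You actually flagged this issue yourself midway through (``$1 - D\cdot 0 \neq \pm 4$ in general''), but your concluding summary paragraph overrides it with an incorrect blanket claim. Restoring the missing $D=5$ argument closes the gap, and then your proof coincides with the paper's.
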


\begin{proof}
    First, let $n \in \NK$. %
    Then there exist $\uu, \vv \in \co^\times$ such that $\uu + \vv = n$.  Let $f_{\uu}(x) = x^2 - ax \pm 1$ be the minimal polynomial of $\eps$, where $a$ is the trace of $\eps$ over $K$. Let $\overline{\eps}$ denote the conjugate of $\eps$ in $K$. As $f_\eps(x) = (x - \eps) ( x- \overline{\eps})$, we note that we have
    \begin{equation*}
        n^2 - an \pm 1 = f_\eps(n) = (n - \eps) (n - \overline{\eps}) = \delta \overline{\delta} = \pm 1 .
    \end{equation*}
    Thus $n^2 - an$ is either $0$ or $ \pm 2$. 
    
    We first consider the case where $n^2 - an = \pm 2$. Therefore $n$ divides $2$, and so either $n = 1$ or $n = 2$.   We observe that $2$ is always an element of the right-hand side of (\ref{eq:NKrealquad}), as $(x,y) = (2, 0)$ is always a solution to $x^2 - Dy^2 = 4$.  If $n = 1$, then $1 - a = \pm 2$, which implies $a = -1$ or $a = 3$.  We thus consider the two cases:
    \begin{itemize}
        \item \textbf{Case $a = -1$}. We have $f_\eps(x) = x^2 + x \pm 1$.  As $K$ is real quadratic, this implies $\eps = (-1 \pm \sqrt{5})/2$, and thus $D = 5$.  As $(x,y) = (1,1)$ is a solution to $x^2 - Dy^2 = -4$, this proves $n = 1$ is an element of the right-hand side of (\ref{eq:NKrealquad}).

        \item \textbf{Case $a = 3$}. We have $f_\eps(x) = x^2 - 3x \pm 1$, which implies either $\eps = (3 \pm \sqrt{5})/2$ or $\eps = (3 \pm \sqrt{13})/2$.  In the latter case, an easy check confirms that $1 - \eps$ is not a unit in $\Q(\sqrt{13})$, thus we must have $\eps = (3 \pm \sqrt{5})/2$.  Therefore by the same argument as the case for $a = -1$, we have that $n = 1$ is an element of the right-hand side of (\ref{eq:NKrealquad}).
    \end{itemize}

    We now consider the case where $n^2 - an = 0$. As $n \neq 0$, this implies $n = a = \text{Tr}_{K/\Q}(\eps)$.  Now if $\eps = (x + y \sqrt{D})/2$ for some integers $x, y$, then since $\eps$ is a unit, this implies $x^2 - D y^2 = \pm 4$, which again proves that $n$ is an element of the right-hand side of (\ref{eq:NKrealquad}).

    Conversely, let $n$ be a positive integer such that $n^2 - Dy^2 = \pm 4$ for some $y \in \Z$.  Define $\eps := (n + y \sqrt{D})/2$.  As $\eps$ has integral trace $n$ and norm $\pm 1$, this implies $\eps \in \co^\times$.   Therefore, as $n = \eps + \overline{\eps}$, this implies $n \in \NK$, which proves the theorem.
\end{proof}

Now in order to bound $|\NK|$ for arbitrary number fields $K$ which do not contain real quadratic subfields, we shall make essential use of the following 
finiteness result for unit equations in three unknowns, using an explicit upper bound on the number of such solutions shown by Amoroso--Viada \cite{AmorosoViada}.

\begin{theorem}[{\cite[p.~412]{AmorosoViada}}]
\label{thm:generalunit}
    Let $K$ be a number field, and let $\Gamma$ be a subgroup of $(K^\times)^3$ of finite rank $r$.  Then the number of solutions $(x_1, x_2, x_3) \in \Gamma$ such that $x_1, x_2, x_3 \neq 1$ and
    \begin{equation*}
        x_1 + x_2 + x_3 = 1 
    \end{equation*}
    is finite and can be bounded above by 
    $24^{324(r+4)}$.
\end{theorem}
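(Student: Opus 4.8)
The plan is to follow the method of Amoroso--Viada \cite{AmorosoViada}, reformulating the counting problem geometrically in the algebraic torus $\mathbb{G}_m^3$ and separating the solutions according to their height. First I would replace $\Gamma$ by its division group (saturation) $\overline{\Gamma} = \{x \in (\overline{\Q}^\times)^3 : x^m \in \Gamma \text{ for some } m \geq 1\}$, which has the same rank $r$ and contains every solution; this lets us work with a radical group and use torsion cosets freely. A solution then corresponds to a point of $\overline{\Gamma}$ lying on the hyperplane section $H = \{x_1 + x_2 + x_3 = 1\} \subset \mathbb{G}_m^3$, and the hypotheses $x_i \neq 1$ are precisely the condition that the point avoids the three proper torsion cosets $\{x_i = 1\} \cap H$ contained in $H$ (equivalently, that no proper subsum vanishes). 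Thus it suffices to bound the number of \emph{non-degenerate} points of $H \cap \overline{\Gamma}$.

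Next, fix a height threshold $T$ and split the non-degenerate solutions into those of \emph{large} height $h(x) > T$ and those of \emph{small} height $h(x) \leq T$, where $h$ is the Weil height on $\mathbb{G}_m^3$. For the large solutions I would use a gap principle: after decomposing the rank-$r$ real vector space $\overline{\Gamma} \otimes \R$ into a controlled number $c_1^{\,r}$ of narrow cones, any two distinct large solutions lying in a common cone and close in height can be shown, by estimating the height of their quotient and the linear relation they would then satisfy on $H$, to force a proper subsum to vanish, contradicting non-degeneracy. This produces geometric growth of heights within each cone, so the number of large solutions is at most $c_2^{\,r}$ for explicit constants $c_1, c_2$ depending only on $n = 3$.

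The heart of the argument, and the main obstacle, is bounding the small solutions. Here I would invoke the central theorem of \cite{AmorosoViada}: an explicit lower bound for the essential minimum of the normalized height of any subvariety $V \subsetneq \mathbb{G}_m^n$ that is not contained in a translate of a proper subtorus, of the shape $\mu^{\mathrm{ess}}(V) \gg (\deg V)^{-\lambda} (\log \cdots)^{-\kappa}$ with explicit exponents. Applying this to (the components of) $H$ shows that points of sufficiently small height cannot be Zariski dense, and hence lie in a finite, explicitly bounded union of translates of proper subtori. On each such proper coset the equation $x_1 + x_2 + x_3 = 1$ restricts to a unit equation in fewer effective variables inside a group of rank $\leq r$, so one descends by induction on $n$, the two-variable equation $x_1 + x_2 = 1$ forming the base case. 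Establishing the explicit essential-minimum bound is the genuinely hard input: it rests on a Lehmer-type descent exploiting the arithmetic of the cyclotomic (Frobenius) action together with an auxiliary-polynomial interpolation construction, and carrying the constants cleanly through the induction is exactly what produces the exponents $4n^4$ and $n + r + 1$.

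Finally, I would choose the threshold $T$ to match the essential-minimum bound so that the two regimes meet, and combine the estimate $c_2^{\,r}$ for the large solutions with the inductive count for the small solutions. Tracking all constants with $n = 3$ then yields the stated bound $24^{324(r+4)} = (8n)^{4 n^4 (n + r + 1)}$.
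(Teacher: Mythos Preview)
The paper does not prove this theorem at all: it is quoted verbatim as a result of Amoroso--Viada \cite{AmorosoViada} and used purely as a black box in the proof of Theorem~\ref{thm:NKfinite}. There is therefore nothing in the paper to compare your proposal against. What you have written is a sketch of the argument in the cited reference itself, not of anything the present authors do.

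As a summary of the Amoroso--Viada method your outline is broadly on target (passage to the division group, the large/small height dichotomy, the gap principle on cones for large solutions, and the essential-minimum lower bound as the key input for small solutions, with induction on $n$). Two cautionary remarks if you intend to flesh it out: first, the gap principle step is more delicate than ``two nearby large solutions force a vanishing subsum''---one has to construct an auxiliary relation and control the height of a carefully chosen combination, and the constants there already interact with the final exponent. Second, the reduction of the small-height solutions to proper torsion cosets is not simply ``points of small height are not Zariski dense''; one needs the quantitative form of the toric Bogomolov/Lehmer bound together with a degree estimate for the union of anomalous subvarieties, and keeping the constants uniform through the induction is exactly where the exponent $4n^4(n+r+1)$ comes from. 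Your sketch acknowledges this is the hard input but does not indicate how the induction is actually organized, which is the part one cannot improvise.
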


\subsection*{Proof of Theorem~\ref{thm:NKfinite}}
    Let $K$ be a degree $d$ number field with a set of fundamental units $\eps_1, \dots, \eps_r$.  We assume $K$ contains no real quadratic subfield. Let $L$ be the Galois closure of $K/\Q$ and let $G = \Gal(L/\Q)$.  Let $\uu, \vv \in \co^\times$ and $n$ be a positive integer such that $\uu + \vv = n$.  
    
    We aim to use Theorem \ref{thm:generalunit} to show that there are only finitely many possibilities for $n$.  For any $\sigma \in G$, we have that $\uu + \vv = n = \sigma(n) = \sigma(\uu) + \sigma(\vv)$.  By dividing through by $\sigma(\vv)$, we therefore obtain the following three term unit equation,
    \begin{equation} \label{eq:uniteq}
        \frac{\uu}{\sigma(\vv)} +  \frac{\vv}{\sigma(\vv)} - \frac{\sigma(\uu)}{\sigma(\vv)} = 1 .
    \end{equation}

    For each $\sigma \in G$, we let $\Gamma_\sigma$ denote the subgroup of $(L^\times)^3$ of finite rank $2r$ generated by the following $2r+1$ elements of $(L^\times)^3$,

    \begin{equation} \label{eq:Gamma_gens}
        \Big\{  \big(\eps_i, \, 1, \, \sigma(\eps_i) \big) \Big\}_{i = 1, \dots, r} \cup \, \Big\{  \big( \tfrac{1}{\sigma(\eps_i)}, \, \tfrac{\eps_i}{\sigma(\eps_i)}, \, \tfrac{1}{\sigma(\eps_i)} \big) \Big\}_{i = 1, \dots, r} \cup \, \big\{ (1, \, 1, \, -1) \big\} .
    \end{equation}
    
    Recall that by Theorem \ref{thm:generalunit}, there exists a finite set $\mathcal{C}_\sigma \subseteq (L^\times)^3$ with the bound $|\mathcal{C}_\sigma| \leq 24^{324(2r+4)}$  such that if $(\alpha, \beta, \gamma) \in \Gamma_\sigma$ satisfies $\alpha,\beta,\gamma\neq 1$ and $\alpha + \beta + \gamma = 1$, then $(\alpha, \beta, \gamma) \in \mathcal{C}_\sigma$.
    We note that the set $\mathcal{C}_{\sigma}$ depends only on the subgroup $\Gamma_{\sigma}$ and in particular on the $2r+1$ generators given in (\ref{eq:Gamma_gens}).  This implies $\mathcal{C}_{\sigma}$ depends only on the action of $\sigma$ on $K$, i.e. the restricted automorphism $\sigma|_K$.  As there are only $d$ distinct such maps $\sigma|_K$, this implies there are at most $d$ distinct sets $\mathcal{C}_\sigma$ amongst all of $\{ \mathcal{C}_\sigma \}_{\sigma \in G}$. 

    We now consider the following two cases:

    \begin{itemize}
        \item \textbf{Case 1.} There exists some $\sigma \in G$ such that none of the three terms on the left-hand side of (\ref{eq:uniteq})  equals 1.  Therefore, applying Theorem \ref{thm:generalunit}, for such a $\sigma$, we have that there exists $(\alpha, \beta, \gamma) \in \mathcal{C}_\sigma$  such that 
        $$\frac{\uu}{\sigma(\vv)} = \alpha, \quad \frac{\vv}{\sigma(\vv)} = \beta, \;\text{ and }\;  -\frac{\sigma(\uu)}{\sigma(\vv)} = \gamma.$$
        In particular, this implies that $n = \uu + \vv = \sigma(\vv) (\alpha + \beta)$, and so $n^{\deg(L/\Q)} = \mathrm{N}_{L/\Q}(n) = \mathrm{N}_{L/\Q}(\alpha + \beta)$. Therefore $n$ is uniquely determined by the triple $(\alpha, \beta, \gamma)$.  As $|\mathcal{C}_\sigma| \leq 24^{324(2r+4)}$ and $r \leq d - 1$, this implies there are at most $ | \bigcup_{\sigma \in G} \mathcal{C}_\sigma | \leq d \cdot 24^{324(2d+2)}$ such possibilities for $n$. 

        \item \textbf{Case 2.}  For each $\sigma \in G$, at least one of the three terms $\uu/\sigma(\vv)$, $\vv/\sigma(\vv)$, or $- \sigma(\uu)/\sigma(\vv)$ must equal 1. If $- \sigma(\uu)/\sigma(\vv) = 1$ then $\uu + \vv = 0$, which contradicts $n$ being a positive integer. Therefore, this means that $\sigma(\vv) \in \{\uu, \vv\}$ for all $\sigma \in G$.  Thus, the minimal polynomial of $\vv$ must have degree at most 2, and so $\Q(\vv)$ is either $\Q$ or an imaginary quadratic field.  In particular, $n$ must therefore be a sum of two units in $\Q$ or an imaginary quadratic field, and thus by Lemma~\ref{lem:easyNK}, we have $n \in \{1, 2\}$.
    \end{itemize}

    Combining both cases, this implies there are at most $\NKbound$ positive integers $n$ such that there exist units $\uu, \vv \in \co^\times$ satisfying $\uu + \vv = n$. This proves the theorem.
\qed %

\bigskip
\noindent \textbf{Remarks.}
\begin{itemize}
    \item We note that Theorem \ref{thm:NKfinite} gives an explicit bound on the size of $\NK$, however the methods used in the proof crucially rely on the ineffective finiteness results for the number of solutions of unit equations in three unknowns.  We therefore pose the open question of whether an effective algorithm to determine $\NK$ exists.

    \item After publishing an initial preprint of this theorem, we received communication from Volker Ziegler giving an alternate method to proving Theorem~\ref{thm:NKfinite}.  In particular, a special case of a theorem of Levesque and Waldschmidt \cite[Theorem~3.3]{LevesqueWaldschmidt} implies that, for any field $K$ of degree $d \geq 3$, the twisted Thue equations of the form $|\mathrm{Nm}_{K/\Q}(X - \eps Y)| = 1$ have only finitely many solutions over all $(X, Y, \eps) \in \Z^2 \times \co^\times$ such that $XY \neq 0$ and $K = \Q(\eps)$. 
    Note that, if $n \in \NK$, then there exists $\eps \in \co^\times$ such that $|\mathrm{Nm}_{K/\Q}(n - \eps)| = 1$. Thus by applying \cite[Theorem~3.3]{LevesqueWaldschmidt} in the special case where $X = n$ and $Y = 1$, this implies $\NK$ is finite.  As their methods also rely heavily on Schmidt's subspace theorem, this similarly cannot yield an effective bound on the height of $n$.

    \item It seems reasonable to conjecture that our computed upper bound of $\NKbound$ for $|\NK|$ is still far from the true maximal value of $|\NK|$ over all degree $d$ number fields.  If we let $n_d$ denote the maximum value of $|\NK|$ over all degree $d$ number fields not containing a real quadratic subfield, we can therefore ask about the true asymptotic behaviour of $n_d$ as $d \to \infty$.
    
    We can give a quadratic lower bound for $n_d$ %
    via the following construction.   For each $d \geq 2$, we construct the number field $M_d := \Q(\alpha_d)$, where $\alpha_d$ is a root of the defining polynomial
    \begin{equation*}
        (x - 2)(x-4) \cdots (x - 2^d) - 1.
    \end{equation*}
    Thus, since $\prod_{i=1}^d (\alpha_d - 2^i) = 1$, this implies that $\alpha_d - 2^i$ are units in $\mathcal{O}_{M_d}^\times$ for all $i =1, \dots, d$.  Therefore, we have the unit solutions $(\alpha_d - 2^j) - (\alpha_d  - 2^i) = 2^i - 2^j$, and so $2^i(1 - 2^{j-i}) \in \mathcal{N}_{M_d}$ for all pairs $1 \leq i < j \leq d$.  This gives the lower bound $n_d \geq |\mathcal{N}_{M_d}| \geq \binom{d}{2} = \frac{d(d-1)}{2}$.

\end{itemize}

\section{The parameterized solutions over cubic fields} \label{sec:minpoly}

For the remainder of this paper, we shall restrict to the case where $K$ is a cubic number field.
For a cubic number field $K$, let $\sigma_i:K \hookrightarrow \mathbb{C}$ be the three embeddings of $K$, for $i = 1, 2, 3$. Let $\mathrm{N}_{K/\Q}(\alpha)=\prod_{i=1}^3\sigma_i(\alpha)$ be the absolute norm of $\alpha \in K$. 

If $n = 0$, then clearly all solutions are given by $(\uu,\vv) = (u, -u)$ for any $u \in \co^\times$.  If $\uu,\vv \in \Q$, then the only possible solutions are $(\uu,\vv) = (1,1)$, $(1,-1)$, $(-1,1)$ or $(-1,-1)$. 
Also, if $n < 0$, then a solution $(u_1, u_2)$ to $\uu + \vv = n$ gives rise to a solution $(-u_1, -u_2)$ to $\uu + \vv = -n$.  Therefore, we may assume for the rest of the paper that $n > 0$ and that $\uu,\vv \nin \Q$. 

We can rewrite equation (\ref{eq:1}) as $\uu=n-\vv$. 
 By taking the absolute norms on both sides we find that 
\[\pm 1= \mathrm{N}_{K/\Q}(\uu) = \mathrm{N}_{K/\Q}(n-\vv)=\prod^3_{i=1}(n-\sigma_i(\vv))=f_\vv(n),\] 
where $f_\vv(x)\in\Z[x]$ is the minimal polynomial of $\vv$. Furthermore, we have that 
\begin{equation}\label{eq:2}
    (-1)^3f_\vv(n-x)=\prod^3_{i=1}(x-(n-\sigma_i(\vv)))=\prod^3_{i=1}(x-\sigma_i(\uu))=f_\uu(x).
\end{equation}
where $f_\uu(x)\in\Z[x]$ is the minimal polynomial of $\uu$.
Let \[f_\vv(x)=x^3+ax^2+bx\pm 1,\] where $a,b\in\Z.$ From the above, it follows that 
\begin{equation} \label{eq:3}
    n^3+an^2+bn = -f_{\uu}(0) - f_{\vv}(0) = 0,\pm 2.
\end{equation} 

We now consider the two different cases $n^3+an^2+bn \neq 0$ and $n^3+an^2+bn = 0$, where the nature of our solutions for the unit equation $\uu + \vv = n$ depend crucially on which of these two cases we are in.

If $n^3+an^2+bn$ is nonzero, then $n$ divides $\pm2$, and thus we can consider the following two cases for $n$: 

\begin{itemize}
    \item \textbf{Case $n=1$}: Equation (\ref{eq:3}) implies that $1 + a + b = \pm 2$, and therefore either $a + b = 1$ or $a + b = -3$, depending on whether $f_\vv(0)=- 1$ or $+1$ respectively.

    \item \textbf{Case $n=2$}: Similarly, equation (\ref{eq:3}) implies that  $4+2a+b=\pm 1$, and thus $b = -2a -3$ or $b = -2a -5$, depending on whether $f_\vv(0)=- 1$ or $+1$ respectively.
\end{itemize}

Let us apply equation (\ref{eq:2}) and summarize the above:

\begin{lemma} \label{lemma:n12}
If $n^3+an^2+bn \neq 0$, then $n = 1$ or $n = 2$ and the minimal polynomials $f_\uu(x)$ and $f_\vv(x)$ take the following form for some integer $a \in \Z$:
\begin{enumerate}
    \item If $n=1$ we have 
    \begin{alignat*}{5}
        f_\uu(x) &= x^3-(a+3)x^2+(a+4)x-1 &\text{ and } f_\vv(x) &= x^3+ax^2-(a-1)x-1 , \\
        \text{or } f_\uu(x)&=x^3-(a+3)x^2+ax + 1 &\text{ and } f_\vv(x)&=x^3+ax^2-(a+3)x+1.
    \end{alignat*}
    
    \item If $n=2$ we have 
    \begin{alignat*}{5}
    f_\uu(x) &= x^3-(a+6)x^2+(2a+9)x-1& \text{ and } f_{\vv}(x) &= x^3+ax^2-(2a+3)x-1 , \\ 
    \text{ or } f_\uu(x) &= x^3-(a+6)x^2+(2a+7)x + 1 &\text{ and } f_{\vv}(x) &= x^3+ax^2-(2a+5)x+1.
    \end{alignat*}
    
\end{enumerate} 
\end{lemma}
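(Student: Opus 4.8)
The plan is simply to push the analysis recorded just above to its conclusion and then apply equation~(\ref{eq:2}). First I would note that $f_\vv$ is genuinely a monic cubic in $\Z[x]$: since $K$ is a cubic field and $\vv \nin \Q$, the degree $[\Q(\vv):\Q]$ divides $3$ and is not $1$, so $\Q(\vv) = K$ and $\deg f_\vv = 3$. As $\vv \in \co^\times$, the constant term $f_\vv(0) = -\mathrm{N}_{K/\Q}(\vv)$ equals $\pm 1$, which is the shape $f_\vv(x) = x^3 + ax^2 + bx \pm 1$ used above; and since $\uu = n - \vv$ satisfies $\Q(\uu) = K$, the polynomial $f_\uu$ appearing in (\ref{eq:2}) really is the minimal polynomial of $\uu$. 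The relation $\pm 1 = \mathrm{N}_{K/\Q}(\uu) = f_\vv(n)$ then gives $n^3 + an^2 + bn = \pm 1 - f_\vv(0) \in \{0, \pm 2\}$ (this is equation~(\ref{eq:3})); since by hypothesis the left-hand side is nonzero and $n$ is a positive integer, $n \mid 2$ forces $n \in \{1, 2\}$.

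Next I would carry out the elementary case split that fixes $b$ in terms of $a$. For $n = 1$, the identity $f_\vv(1) = \pm 1$ reads $1 + a + b + f_\vv(0) = \pm 1$, which leaves two candidate values for $a + b$; the hypothesis $1 + a + b \neq 0$ discards one of them, leaving $a + b = 1$ when $f_\vv(0) = -1$ and $a + b = -3$ when $f_\vv(0) = +1$. Substituting $b = 1 - a$, respectively $b = -3 - a$, yields the two displayed forms of $f_\vv$ in case~(1). The case $n = 2$ is identical in spirit: $f_\vv(2) = 8 + 4a + 2b + f_\vv(0) = \pm 1$ together with $8 + 4a + 2b \neq 0$ forces $b = -2a - 3$ if $f_\vv(0) = -1$ and $b = -2a - 5$ if $f_\vv(0) = +1$, giving the two forms of $f_\vv$ in case~(2).

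Finally, for each of these four choices of $(a, b, f_\vv(0))$ I would read off $f_\uu$ from equation~(\ref{eq:2}), namely $f_\uu(x) = -f_\vv(n - x)$; expanding $-(n-x)^3 - a(n-x)^2 - b(n-x) - f_\vv(0)$ for $n \in \{1, 2\}$ and collecting terms produces exactly the polynomials $f_\uu$ listed in the statement. I do not expect any real obstacle: the whole argument is a bounded elementary computation, and the only thing requiring care is the bookkeeping of the two independent sign choices --- the sign of $f_\vv(0)$ and the sign of $\mathrm{N}_{K/\Q}(\uu) = f_\vv(n)$ --- together with the observation that in each branch it is precisely the non-vanishing hypothesis $n^3 + an^2 + bn \neq 0$ that selects the correct value of the remaining coefficient (omitting it would produce extra degenerate families). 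As a check on the four final polynomial identities I would also verify them by evaluating both sides at, say, $x = 0$ and $x = 1$.
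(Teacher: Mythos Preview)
Your proposal is correct and follows essentially the same route as the paper: derive $n\mid 2$ from equation~(\ref{eq:3}), split on $n\in\{1,2\}$ and on the sign of $f_\vv(0)$ to pin down $b$ in terms of $a$, and then read off $f_\uu$ from $f_\uu(x)=-f_\vv(n-x)$. Your extra remarks (that $\deg f_\vv=3$ since $\vv\notin\Q$, and that the non-vanishing hypothesis is exactly what forces $f_\vv(n)$ and $f_\vv(0)$ to have opposite signs) are correct and make the bookkeeping more explicit than the paper does, but do not change the argument.
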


Let us assume now that  $n^3+an^2+bn = 0$ and so $f_\uu(0)=-f_\vv(0)$. Thus, $n$ is a root of $x^2+ax+b$, and in particular, $a=-n-n_\vv$ and $b=nn_\vv,$ for some $n_\vv \in \Z.$ Without loss of generality, assume that $f_\vv(0)=1$, then $f_\uu(0)=-1$. 
Given that $f_\vv(x)=x^3-(n+n_\vv)x^2+nn_\vv x+1$ we see that 
\begin{align}
    f_\uu(x) = -f_{\vv}(n-x)
    &= - (n-x)^3 + (n + n_\vv) (n-x)^2 - n n_\vv (n-x) - 1 \nonumber  \\
    &= x^3-(2n-n_\vv)x^2+(n^2-nn_\vv)x-1 \label{eq:pre4} \\
    &= x^3-(n+n_\uu)x^2+nn_\uu x-1, \label{eq:4}
\end{align}
where $n_\uu := n - n_\vv$.  This result, which is essentially a reformulation of a proposition given originally by Minemura \cite[Proposition~1(a)]{Minemura}, %
can be summarized in the following lemma:

\begin{lemma}\label{lemma:n>}
If $n^3+an^2+bn = 0$ and $\uu, \vv \in \co^\times$ is a solution to $\uu+\vv=n$, then $\uu$ and $\vv$ have minimal polynomials
\begin{equation*}
    f_\uu(x)=x^3-(n+n_\uu)x^2+nn_\uu x-1 \text{ and } f_\vv(x)=x^3-(n+n_\vv)x^2+nn_\vv x+1,
\end{equation*}
respectively, where $n=n_\uu+n_\vv$ for some integers $n_\uu,n_\vv\in \Z.$
\end{lemma}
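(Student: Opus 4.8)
The plan is to observe that this lemma merely repackages the computation already carried out in (\ref{eq:2})--(\ref{eq:4}), so the work reduces to three small points: that $x^2+ax+b$ splits over $\Z$, a symmetry reduction to the normalisation $f_\vv(0)=1$, and one cubic substitution.

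First I would note that, since $n\neq 0$, the hypothesis $n^3+an^2+bn=0$ is equivalent to $g(n)=0$ for the monic integer polynomial $g(x):=x^2+ax+b$. A monic integer quadratic with an integer root factors over $\Z$, so $g(x)=(x-n)(x-n_\vv)$ with $n_\vv:=-a-n\in\Z$; equivalently $a=-(n+n_\vv)$ and $b=nn_\vv$. Hence the minimal polynomial of $\vv$ is $f_\vv(x)=x^3-(n+n_\vv)x^2+nn_\vv x+f_\vv(0)$ with $f_\vv(0)\in\{\pm1\}$.

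Next I would dispose of the sign of $f_\vv(0)$ by symmetry. Equation (\ref{eq:3}) shows that $n^3+an^2+bn=0$ forces $f_\uu(0)=-f_\vv(0)$, and since both constant terms lie in $\{\pm1\}$, exactly one of $f_\uu(0),f_\vv(0)$ equals $+1$. The equation $\uu+\vv=n$ is symmetric in $\uu$ and $\vv$, and all standing hypotheses ($n>0$, $\uu,\vv\nin\Q$, $\uu,\vv\in\co^\times$) are preserved under interchanging them, so without loss of generality $f_\vv(0)=1$ and $f_\uu(0)=-1$. Then I would substitute $f_\vv(x)=x^3-(n+n_\vv)x^2+nn_\vv x+1$ into the identity $f_\uu(x)=-f_\vv(n-x)$ of (\ref{eq:2}) and expand, exactly as in (\ref{eq:pre4})--(\ref{eq:4}): collecting powers of $x$ yields $f_\uu(x)=x^3-(2n-n_\vv)x^2+(n^2-nn_\vv)x-1$, and writing $n_\uu:=n-n_\vv$ turns the coefficients into $n+n_\uu$ and $nn_\uu$, with $n=n_\uu+n_\vv$ by definition. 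Both cubics are genuine minimal polynomials: by the standing assumption $\uu,\vv\nin\Q$ together with $[K:\Q]=3$, each of $\uu,\vv$ generates $K$ over $\Q$, so the monic integer cubic it satisfies (already identified as its minimal polynomial in (\ref{eq:2})) is indeed of degree $3$.

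There is no serious obstacle here: the heart of the argument is the single polynomial expansion, which is routine. The only points needing a word of care are the integrality of $n_\vv$ (immediate from monicity of $g$) and the legitimacy of the reduction "without loss of generality $f_\vv(0)=1$", which is why I would explicitly record that interchanging $\uu$ and $\vv$ preserves every hypothesis.
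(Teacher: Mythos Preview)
Your proposal is correct and follows essentially the same approach as the paper: the lemma is indeed just a repackaging of the computation in (\ref{eq:2})--(\ref{eq:4}), and you reproduce that computation with the same key steps (factoring $x^2+ax+b$ over $\Z$, the WLOG reduction to $f_\vv(0)=1$, and the expansion of $-f_\vv(n-x)$). Your added remarks on the integrality of $n_\vv$, the legitimacy of the symmetry reduction, and the fact that the cubics are genuine minimal polynomials are welcome clarifications that the paper leaves implicit.
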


In order to classify all solutions $(\uu, \vv)$ to the unit equation $\uu + \vv = n$, it thus suffices to classify the possible integers $a \in \Z$ in Lemma~\ref{lemma:n12} and the possible integers $n_\uu, n_\vv\in \Z$ in Lemma~\ref{lemma:n>}.

\section{Discriminants} \label{sec:disc}

Let us recall that the \emph{discriminant} of a degree $d$ monic polynomial $f(x) \in \Z[x]$ is given by $\prod_{1 \leq i < j \leq d} (\alpha_i - \alpha_j)^2$ where $\alpha_1, \dots, \alpha_d$ are the $d$ complex roots of $f$. We denote this as $\disc(f)$. We also define the discriminant of an algebraic integer $\alpha \in \co$ as the discriminant of its minimal polynomial over $K$.

We begin by stating the following well known results (e.g. see Shanks \cite{S}):

\begin{lemma}
Let $K$ be a cyclic cubic number field, then the discriminant of $K$ is a square in $\Z.$ 
\end{lemma}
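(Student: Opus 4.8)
The plan is to use the classical principle that the discriminant of a number field is a perfect square precisely when the Galois group of its normal closure acts on the embeddings by even permutations. For a cyclic cubic field we have $\Gal(K/\Q) \cong C_3$, acting on the three embeddings $\sigma_1, \sigma_2, \sigma_3$ via its regular representation, which consists only of the identity and two $3$-cycles. Since $3$-cycles are even permutations, this will force $\disc(K)$ to be a square.

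First I would fix a $\Z$-basis $\omega_1, \omega_2, \omega_3$ of $\co$ and recall the standard formula $\disc(K) = D^2$, where $D := \det\big( (\sigma_i(\omega_j))_{1 \leq i,j \leq 3} \big)$. Each entry $\sigma_i(\omega_j)$ is an algebraic integer, so $D$, being a $\Z$-linear combination of products of these entries, is itself an algebraic integer. The proof then reduces to showing that $D \in \Z$, for this immediately exhibits $\disc(K) = D^2$ as a square in $\Z$.

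Next I would study the action of $G := \Gal(K/\Q)$ on $D$. Because $K/\Q$ is Galois, each embedding $\sigma_i$ has image $K$, and for every $\tau \in G$ the composite $\tau \circ \sigma_i$ is again one of $\sigma_1, \sigma_2, \sigma_3$; hence $\tau$ induces a permutation $\pi_\tau$ of the three embeddings. Applying $\tau$ to each entry of the matrix $(\sigma_i(\omega_j))$ permutes its rows by $\pi_\tau$, giving $\tau(D) = \operatorname{sgn}(\pi_\tau)\, D$. As $G \cong C_3$, each $\pi_\tau$ has order $1$ or $3$ in the symmetric group on three letters, and a permutation of order $3$ must be a $3$-cycle; thus every $\pi_\tau$ is even and $\tau(D) = D$. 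Being fixed by all of $G$, the element $D$ lies in the fixed field $K^G = \Q$; an algebraic integer lying in $\Q$ is a rational integer, so $D \in \Z$ and $\disc(K) = D^2$ is a square.

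The step I expect to be the only real point of care is verifying that the $C_3$-action on the embeddings lands inside the alternating group, i.e. that the nontrivial elements act as $3$-cycles rather than transpositions. This is forced purely by order considerations---a transposition has order $2$, which cannot be the image of an element of order $3$---so no deep input is required; the remaining ingredients (the determinant formula for the discriminant, the row-permutation computation for $\tau(D)$, and the integral closedness of $\Z$ in $\Q$) are all routine.
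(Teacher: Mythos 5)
Your proof is correct: the homomorphism $\tau \mapsto \pi_\tau$ from $\Gal(K/\Q) \cong C_3$ into $S_3$ must land in $A_3$ for order reasons, so $D = \det(\sigma_i(\omega_j))$ is a Galois-invariant algebraic integer, hence lies in $\Z$, and $\disc(K) = D^2$. The paper gives no proof of this lemma at all---it is stated as well known with a citation to Shanks---and your argument is precisely the standard one that such a citation points to, so there is nothing to flag.
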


\begin{lemma}
    Let $K$ be a complex cubic number field, then the discriminant of $K$ is a negative integer.
\end{lemma}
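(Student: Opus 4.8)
The plan is to read off the sign of $\disc(K)$ directly from the signature of $K$. I would fix an integral basis $\omega_1, \omega_2, \omega_3$ of $\co$ and form the matrix $M = (\sigma_i(\omega_j))_{1 \le i,j \le 3}$, so that by definition $\disc(K) = (\det M)^2$. That this quantity lies in $\Z$ is classical, so the entire content of the lemma is to determine its sign.

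Since $K$ is a complex cubic field it has signature $(1,1)$: after relabelling the embeddings, $\sigma_1$ is the real embedding and $\sigma_2, \sigma_3$ form a complex conjugate pair, $\sigma_3 = \overline{\sigma_2}$. The key observation is that applying complex conjugation to every entry of $M$ fixes the first row (as $\sigma_1(\omega_j) \in \R$) and interchanges the second and third rows. Since swapping two rows of a matrix negates its determinant, this yields $\overline{\det M} = -\det M$, so $\det M$ is purely imaginary, and therefore $\disc(K) = (\det M)^2 \le 0$. Separability of $K/\Q$ (equivalently, $\Q$-linear independence of $\omega_1,\omega_2,\omega_3$) forces $\det M \ne 0$, hence $\disc(K) < 0$.

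An equivalent route, which may dovetail better with the defining-polynomial viewpoint used later in the paper, is to pick any generator $K = \Q(\alpha)$ with $\alpha \in \co$ and minimal polynomial $f$, invoke the classical index formula $\disc(f) = [\co : \Z[\alpha]]^2 \disc(K)$ to reduce to determining the sign of $\disc(f)$, and then observe that with roots $\alpha_1 \in \R$ and $\alpha_2, \alpha_3 = \overline{\alpha_2}$ one has $\disc(f) = \big((\alpha_1 - \alpha_2)(\alpha_1 - \alpha_3)\big)^2 (\alpha_2 - \alpha_3)^2$, which is a positive real multiple of $(\alpha_2 - \alpha_3)^2 = -4\,\mathrm{Im}(\alpha_2)^2 < 0$.

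I do not anticipate any genuine obstacle here: the argument is entirely standard. The only points requiring mild care are the classical facts that $\disc(K) \in \Z$ and that the polynomial and field discriminants differ by the square of an index, together with correctly tracking which embeddings are complex conjugates of one another.
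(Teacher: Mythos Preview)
Your argument is correct and entirely standard. The paper itself does not supply a proof of this lemma: it is stated alongside the companion facts about discriminants as one of several ``well known results'' with a reference to Shanks, so there is no proof in the paper to compare against. Either of your two routes (the row-swap observation on the embedding matrix, or the direct computation of $\disc(f)$ for a generator) would serve perfectly well as a self-contained justification.
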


\begin{lemma}
Let $\alpha \in \co$. Then the discriminant of $\alpha$ is a square multiple of the discriminant of $K.$
\end{lemma}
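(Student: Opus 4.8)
The plan is to reduce the claim to the classical index formula $\disc(\alpha) = [\co : \Z[\alpha]]^2 \cdot \disc(K)$, which expresses the discriminant of a generator in terms of the field discriminant and the index of the order it generates. First I would note that, for the quantity $\disc(\alpha)$ (the discriminant of the minimal polynomial of $\alpha$ over $\Q$) to be directly comparable with $\disc(K)$, one needs $K = \Q(\alpha)$, so I would take this as a standing hypothesis; it is harmless here, since in all applications of this lemma the element $\alpha$ is one of the units $\uu, \vv \nin \Q$ inside a cubic field $K$, and such an element automatically generates $K$. Set $d = [K:\Q]$; then $1, \alpha, \dots, \alpha^{d-1}$ are $\Q$-linearly independent algebraic integers, so $\Z[\alpha]$ is a sublattice of full rank $d$ in $\co$.

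Next I would fix an integral basis $\omega_1, \dots, \omega_d$ of $\co$ and write $\alpha^{i-1} = \sum_{j=1}^{d} c_{ij}\,\omega_j$ with $c_{ij} \in \Z$, so that $C := (c_{ij}) \in \mathrm{Mat}_d(\Z)$ is the change-of-basis matrix and $|\det C| = [\co : \Z[\alpha]]$, a nonzero integer. Writing $\sigma_1, \dots, \sigma_d$ for the embeddings $K \hookrightarrow \C$, applying $\sigma_k$ to the relation above gives the matrix identity $\big(\sigma_k(\alpha^{i-1})\big)_{i,k} = C \cdot \big(\sigma_k(\omega_j)\big)_{j,k}$. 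Taking determinants, squaring, and recalling that $\disc(\alpha) = \det\!\big(\sigma_k(\alpha^{i-1})\big)^2$ while $\disc(K) = \det\!\big(\sigma_k(\omega_j)\big)^2$, I obtain $\disc(\alpha) = (\det C)^2 \cdot \disc(K)$. Since $\det C \in \Z$, this exhibits $\disc(\alpha)$ as a square integer multiple of $\disc(K)$, which is exactly the claim, with the bonus that the square factor equals $[\co:\Z[\alpha]]^2$.

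This argument is entirely routine, so there is no genuine obstacle; the only points requiring attention are the hypothesis $K = \Q(\alpha)$ needed for the statement to be meaningful (otherwise $\disc(\alpha)$ only reflects the proper subfield $\Q(\alpha)$) and the observation that the change-of-basis matrix $C$ has nonzero integer determinant because $\Z[\alpha]$ and $\co$ are lattices of the same full rank $d$. Alternatively one could simply cite this as a standard fact from any algebraic number theory text, but the short computation above keeps the discussion self-contained.
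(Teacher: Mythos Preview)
Your proof is correct. The paper does not actually prove this lemma; it simply lists it among ``well known results (e.g.\ see Shanks~\cite{S})'' without argument. Your derivation of the index formula $\disc(\alpha) = [\co:\Z[\alpha]]^2 \disc(K)$ via the change-of-basis matrix is the standard textbook proof, and your caveat that one needs $K = \Q(\alpha)$ for the statement to make sense is well taken (and indeed holds in every application the paper makes of this lemma, since there $\alpha$ is always a unit $\uu$ or $\vv$ in a cubic field with $\uu,\vv \nin \Q$).
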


Putting these three lemmas together gives the following:
\begin{cor}\label{cor:mem}
If $K$ is a cyclic cubic number field, then the discriminant of any element $\alpha \in \co$ is an integer square.  If $K$ is a complex cubic number field, then the discriminant of any element $\alpha \in \co$ is negative.
\end{cor}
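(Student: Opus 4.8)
The plan is to simply chain together the three lemmas stated immediately above, since the corollary is a formal consequence of them. Fix $\alpha \in \co$ and let $m$ denote the index $[\co : \Z[\alpha]]$, so that in particular $m$ is a positive integer whenever $\Q(\alpha) = K$ (which, since $[K:\Q]=3$, holds for every non-rational $\alpha \in \co$ — the only case we actually need later, as we have already reduced to $\uu,\vv \nin \Q$). By the third lemma we may write $\disc(\alpha) = m^2 \disc(K)$.

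First I would treat the cyclic cubic case: by the first lemma there is some $t \in \Z$ with $\disc(K) = t^2$, and hence $\disc(\alpha) = m^2 t^2 = (mt)^2$, which is an integer square. Next I would treat the complex cubic case: by the second lemma $\disc(K)$ is a negative integer, and since $m \geq 1$ we get $m^2 \geq 1$, so $\disc(\alpha) = m^2 \disc(K) < 0$. These two computations together give both assertions of the corollary.

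There is essentially no obstacle here: all of the substance is contained in the three preceding lemmas (squareness of $\disc(K)$ for cyclic cubics, negativity of $\disc(K)$ for complex cubics, and the index-square relation $\disc(\alpha) = [\co:\Z[\alpha]]^2 \disc(K)$), and the corollary follows by a one-line combination. The only point meriting a word of care is that the multiplier $m$ in $\disc(\alpha) = m^2\disc(K)$ is nonzero in the complex case, so that the strict inequality $\disc(\alpha) < 0$ is preserved; this is immediate once one recalls that $m$ is the (finite, positive) index of $\Z[\alpha]$ in $\co$ for $\alpha$ generating $K$.
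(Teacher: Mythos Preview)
Your proposal is correct and matches the paper's approach exactly: the paper gives no explicit proof, simply stating that the corollary follows by ``putting these three lemmas together,'' which is precisely what you do. Your added remark that the relation $\disc(\alpha) = m^2 \disc(K)$ requires $\alpha$ to generate $K$ (and hence the negativity claim in the complex case tacitly excludes rational $\alpha$) is a fair clarification that the paper leaves implicit.
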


Now, let us compute the discriminants of the minimal polynomials in the previous section. We begin by examining polynomials from Lemma \ref{lemma:n12}. We have:

\begin{lemma} \label{lemma:n12_disc}
For each pair of polynomials $f_\uu(x)$ and $f_\vv(x)$ in Lemma \ref{lemma:n12}, we have the following discriminants:
\begin{enumerate}
    \item If $n=1$, then 
    \begin{align*}
        \disc(f_{\uu}) &= \disc(f_{\vv}) = a^4 + 6a^3 + 7a^2 - 6a - 31 \\
        \text{or } \disc(f_{\uu}) &= \disc(f_{\vv}) = (a^2+3a+9)^2.
    \end{align*}
    \item If $n=2$, then 
        \begin{align*}
             \disc(f_{\uu}) &= \disc(f_{\vv}) =(4a^2 + 24a + 9)(a+3)^2 \\
             \text{or } \disc(f_{\uu}) &= \disc(f_{\vv}) = 4a^4 + 48a^3 + 229a^2 + 510a + 473.
        \end{align*}
\end{enumerate}

\end{lemma}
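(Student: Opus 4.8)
The plan is first to halve the work by noting that, within each pair, $f_\uu$ and $f_\vv$ have equal discriminants, and then to substitute a single representative of each pair into the standard discriminant formula for a monic cubic.

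For the reduction: equation~(\ref{eq:2}) gives $f_\uu(x) = -f_\vv(n-x)$, so the complex roots of $f_\uu$ are exactly $n - \sigma_i(\vv)$ for $i = 1, 2, 3$. Hence each pairwise difference $\sigma_i(\uu) - \sigma_j(\uu)$ equals $-(\sigma_i(\vv) - \sigma_j(\vv))$, and since the discriminant of a monic cubic is the product of the squares of these three differences, we get $\disc(f_\uu) = \disc(f_\vv)$ in all four cases. It therefore suffices to compute $\disc(f_\vv)$.

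For that, I would use the classical identity that for $f(x) = x^3 + px^2 + qx + r$,
\[
    \disc(f) = 18pqr - 4p^3 r + p^2 q^2 - 4 q^3 - 27 r^2 .
\]
Reading off $(p,q,r)$ from each polynomial $f_\vv$ in Lemma~\ref{lemma:n12} and substituting: for $n = 1$, the first pair has $(p,q,r) = (a,\, 1-a,\, -1)$, and the formula collapses to $a^4 + 6a^3 + 7a^2 - 6a - 31$; the second pair has $(p,q,r) = (a,\, -(a+3),\, 1)$, and here one can shortcut the computation by observing that $f_\vv(-x) = -f_a(x)$ for Shanks' simplest cubic $f_a$, so that $\disc(f_\vv) = \disc(f_a) = (a^2 + 3a + 9)^2$ (the well-known value, which also follows directly from the formula). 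For $n = 2$, the first pair has $(p,q,r) = (a,\, -(2a+3),\, -1)$, which yields a quartic in $a$ that one then checks factors as $(4a^2 + 24a + 9)(a+3)^2$; and the second pair has $(p,q,r) = (a,\, -(2a+5),\, 1)$, yielding $4a^4 + 48a^3 + 229 a^2 + 510 a + 473$.

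There is no genuine obstacle here: the whole argument is the reduction $\disc(f_\uu)=\disc(f_\vv)$ together with four substitutions into a fixed polynomial identity. The only points requiring care are the sign bookkeeping in the substitutions and, in the first $n=2$ case, recognizing (or verifying by expansion) the factorization $(4a^2+24a+9)(a+3)^2$; both can be double-checked painlessly by evaluating at a couple of small integer values of $a$.
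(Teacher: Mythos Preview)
Your proposal is correct and is exactly the approach the paper has in mind: the paper states Lemma~\ref{lemma:n12_disc} without proof as a routine computation, and your reduction via $f_\uu(x)=-f_\vv(n-x)$ together with four substitutions into the standard cubic discriminant formula is precisely what fills that gap. The observation $f_\vv(-x)=-f_a(x)$ in the second $n=1$ case is also used by the paper later (in the proof of Lemma~\ref{lemma:n12_cyclic}), so you are in full agreement with the intended argument.
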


By similarly examining the polynomials from Lemma \ref{lemma:n>}, we have:

\begin{lemma} \label{lemma:n>_disc}
    If $n^3 + an^2 + bn = 0$, then $n = U$ and
    $$ \disc(f_\uu) = (UV)^2(U+V)^2 - 2(U-V)^3 + 6(U^3-V^3) - 27 $$ for some integers $U, V \in \Z$.
\end{lemma}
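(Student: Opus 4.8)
The plan is to compute $\disc(f_\uu)$ directly from the explicit shape of $f_\uu$ provided by Lemma~\ref{lemma:n>}, and then to repackage the answer into the claimed two-variable polynomial. Recall that in the case $n^3+an^2+bn=0$, Lemma~\ref{lemma:n>} gives $f_\uu(x)=x^3-(n+n_\uu)x^2+n n_\uu x-1$ for some integer $n_\uu$ with $n_\uu+n_\vv=n$. First I would invoke the standard discriminant formula for a monic cubic $x^3+Ax^2+Bx+C$, namely $A^2B^2-4B^3-4A^3C+18ABC-27C^2$, applied with $A=-(n+n_\uu)$, $B=n n_\uu$, $C=-1$; this gives $\disc(f_\uu)=(n+n_\uu)^2(n n_\uu)^2-4(n n_\uu)^3-4(n+n_\uu)^3+18(n+n_\uu)(n n_\uu)-27$. (Alternatively, one could write $f_\uu(x)=x(x-n)(x-n_\uu)-1$ and compute the discriminant via the resultant with $f_\uu'$, exploiting that it agrees with the derivative of $x(x-n)(x-n_\uu)$; this leads to the same formula.)

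Next I would simplify using the elementary factorizations $(n+n_\uu)^2-4n n_\uu=(n-n_\uu)^2$ and $2(n+n_\uu)^2-9n n_\uu=(2n-n_\uu)(n-2n_\uu)$, which collapse the displayed expression to $\disc(f_\uu)=n^2n_\uu^2(n-n_\uu)^2-2(n+n_\uu)(2n-n_\uu)(n-2n_\uu)-27$. The crucial step is then the change of variables $U:=n$ and $V:=n_\uu-n$ (equivalently $V=-n_\vv$), under which $n_\uu=U+V$, $n-n_\uu=-V$, $n+n_\uu=2U+V$, $2n-n_\uu=U-V$ and $n-2n_\uu=-(U+2V)$; substituting these turns the discriminant into $(UV)^2(U+V)^2+2(U-V)(2U+V)(U+2V)-27$. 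Finally, the polynomial identity $2(U-V)(2U+V)(U+2V)=-2(U-V)^3+6(U^3-V^3)$ --- both sides being $2(U-V)(2U^2+5UV+2V^2)$ --- rewrites this as exactly the claimed expression, with $U=n$.

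I do not anticipate a real obstacle: the whole argument is elementary polynomial algebra, and the one genuine subtlety is choosing the right substitution. The naive choice $V=n_\uu$, which would make $f_\uu(x)=x(x-U)(x-V)-1$, does not work --- it produces a leading term $(UV)^2(U-V)^2$ rather than $(UV)^2(U+V)^2$ --- so one really has to shift and take $V=n_\uu-n$. As a quick sanity check I would verify the final identity numerically at a small value, e.g.\ $(n,n_\uu)=(3,1)$ gives $f_\uu(x)=x^3-4x^2+3x-1$ with $\disc(f_\uu)=-31$, matched by $(U,V)=(3,-2)$ on the right-hand side.
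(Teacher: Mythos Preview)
Your proof is correct and essentially identical to the paper's: both make the substitution $U=n$, $V=-n_\vv$ (your $V=n_\uu-n$ is the same thing since $n_\uu+n_\vv=n$) into the explicit form of $f_\uu$ and then compute the discriminant. The paper just invokes equation~(\ref{eq:pre4}) and records the result without writing out the intermediate algebra you carefully supply.
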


\begin{proof}
    Using equation (\ref{eq:pre4}), we substitute $U$ and $V$ for $n$ and $-n_\vv$ respectively to get $f_\uu(x) = x^3-(2U+V)x^2+(U+V)Ux-1$.  Taking the discriminant of this polynomial gives $(UV)^2(U+V)^2 - 2(U-V)^3 + 6(U^3-V^3) - 27$ as claimed.
\end{proof}

\section{The cyclic cubic fields} \label{sec:cyclicubic}

In this section, we now restrict to the case where $K$ is a cyclic cubic field, and hence has square discriminant.  We can use this condition with Corollary~\ref{cor:mem} to explicitly obtain the possible minimal polynomials $f_\uu(x)$ and $f_\vv(x)$, using both Lemma \ref{lemma:n12_disc} and Lemma \ref{lemma:n>_disc} to handle the two cases $n^3 + an^2 + bn \neq 0$ and $n^3 + an^2 + bn = 0$ respectively.

\begin{lemma} \label{lemma:n12_cyclic}
    Let $K$ be a cyclic cubic number field and $\uu, \vv \in \co^\times$ such that $\uu + \vv = n$ for some integer $n$. Then if $n^3 + an^2 + bn \neq 0$, then either $K = K_a$ and $(\uu,\vv)$ is equivalent to $(-\rho_a, \rho_a+1)$, or $a \in \{-10, -6, -5, -4, -2, 0, 2, 4\}$.
\end{lemma}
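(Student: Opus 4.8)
The plan is to exploit the fact that, since $K$ is cyclic cubic, Corollary~\ref{cor:mem} forces the discriminant of $\uu$ (equivalently of $\vv$) to be a perfect square in $\Z$. We combine this with the explicit discriminant formulas of Lemma~\ref{lemma:n12_disc}, splitting according to the two shapes of minimal polynomial appearing in Lemma~\ref{lemma:n12} and according to $n = 1$ or $n = 2$.

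First I would handle the two ``quartic'' discriminant cases. For $n = 1$ with $\disc(f_\vv) = a^4 + 6a^3 + 7a^2 - 6a - 31$, and for $n = 2$ with $\disc(f_\vv) = 4a^4 + 48a^3 + 229a^2 + 510a + 473$, we must have these quartics equal to a square. I would recognise each as lying strictly between two consecutive squares of the obvious polynomial approximations (e.g.\ $a^4 + 6a^3 + 7a^2 - 6a - 31$ is squeezed between $(a^2 + 3a - 1)^2$ and $(a^2 + 3a + 2)^2$, or similar), so that equality to a square can only occur for finitely many small $a$; a direct check of those finitely many values then yields precisely the list $a \in \{-10, -6, -5, -4, -2, 0, 2, 4\}$ of exceptional parameters (after discarding those that do not in fact give a square, and those for which the resulting field is not cyclic or the polynomial is reducible). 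This is essentially a routine ``squeeze'' argument on a quartic, so it should go through cleanly; the one point requiring care is checking each surviving small value of $a$ by hand and tracking which ones genuinely produce solutions.

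Next I would treat the two ``factored'' discriminant cases, which are the source of the generic family. For $n = 1$ the discriminant is $(a^2 + 3a + 9)^2$, already a perfect square for every $a$; here one reads off that $f_\vv(x) = x^3 + ax^2 - (a+3)x + 1$ and $f_\uu(x) = x^3 - (a+3)x^2 + ax + 1$, which are exactly (up to the substitution $x \mapsto -x$ and the identity $K_a = K_{-a-3}$) the defining polynomials $f_a$ of Shanks' simplest cubic fields; so in this branch $K = K_a$ and $(\uu,\vv)$ is equivalent to $(-\rho_a, \rho_a + 1)$, giving the stated conclusion. For $n = 2$ the discriminant is $(4a^2 + 24a + 9)(a+3)^2$, which is a square iff $4a^2 + 24a + 9$ is a square; writing $4a^2 + 24a + 9 = (2a+6)^2 - 27 = m^2$ reduces to $(2a+6-m)(2a+6+m) = 27$, a finite factorisation problem whose solutions for $a$ land inside the exceptional list (this is where values like $a = -2, 0$ enter for $n=2$).

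The main obstacle I anticipate is not any single computation but the bookkeeping: one must run all four discriminant cases, in each one distinguish when the discriminant is a square, solve the resulting finite Diophantine condition, and then for each exceptional $a$ verify separately whether the field is genuinely cyclic (the discriminant being a square is necessary but one should confirm $f$ is irreducible and Galois) and whether $\uu,\vv$ are actually units and not in $\Q$ — so that the final list $\{-10,-6,-5,-4,-2,0,2,4\}$ is exactly right and the ``generic'' branch is correctly identified with the simplest cubic family $K_a$. The later sections presumably pin down which of these exceptional $a$ yield the sporadic solutions of Table~\ref{tab:cyclicsolutions}; here it suffices to produce the finite list.
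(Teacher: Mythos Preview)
Your plan is essentially the paper's proof: invoke Corollary~\ref{cor:mem} and run through the four discriminant formulas of Lemma~\ref{lemma:n12_disc}, with the $n=1$ second case giving the simplest-cubic family and the other three cases giving finitely many $a$. The paper handles the two quartic discriminants not by a squeeze but by an exact completion of the square (e.g.\ $a^4+6a^3+7a^2-6a-31 = (a^2+3a-1)^2 - 32$ and $4a^4+48a^3+229a^2+510a+473 = (2a^2+12a+\tfrac{85}{4})^2 + \tfrac{343}{16}$) followed by a difference-of-squares factorisation; your squeeze variant works too, though your sample bounds are on the wrong side since the quartic is \emph{below} $(a^2+3a-1)^2$. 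Your bookkeeping of which $a$ come from which case is off: the two quartic cases contribute only $a\in\{-5,2\}$ (for $n=1$) and $a\in\{-4,-2\}$ (for $n=2$), while the factored $n=2$ case $(2a+6)^2-27=m^2$ contributes $a\in\{-10,-6,0,4\}$, not $a=-2$ as you wrote.
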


\begin{proof}
The conclusion will follow from Corollary~\ref{cor:mem}, that we need the discriminants given in Lemma~\ref{lemma:n12_disc} to equal a square. 

For $n = 1$, the discriminant of the first pair of minimal polynomials $f_\uu(x)$ and $f_\vv(x)$ is an irreducible polynomial in $a$ of degree greater than two, and thus by Siegel's Theorem, can only represent finitely many squares.  In particular, if $\disc(f_{\uu}) = k^2$ for some $k \in \Z$, then  by completing the square, we have that $(a^2 + 3a - 1)^2 - 32 = k^2$. Thus by a difference of squares, we obtain that $a^2 + 3a - 1 = \frac{d}{2} + \frac{16}{d}$ for some integer divisor $d$ of 32. A simple check yields the only possible integer solutions for $a$ are $a = -5$ or $2$.
For the second pair of minimal polynomials, these define Shanks' simplest cubic fields \cite{S}, where in particular we note that $f_\uu(x) = -x^3 f_a(-1/x)$ and $f_\vv(x) = - f_a(-x)$.  This therefore implies that $(\uu, \vv)$ is equivalent to $(-\rho_a, \rho_a+1)$.

For $n=2$, we proceed by a similar argument to obtain a finite list of possible values for $a$.  In the first case, we must have that $(4a^2 + 24a + 9)$ is a square. By again completing a square we have that $(2a+6)^2-27=k^2$, for some $k\in \Z$. In particular, the only possible solutions are $a=-10,-6,0,4$.  In the second case, we again complete the square to obtain $(2a^2 + 12a + \frac{85}{4})^2 + \frac{343}{16} = k^2$ for some $k \in \Z$, and by another similar argument, we obtain that $8a^2 + 48a + 85 = \frac{d}{2} - \frac{343}{2d}$ for some integer divisor $d$ of 343. A simple check yields the only possible solutions for $a$ are $a = -4$ or $a = -2$.
\end{proof}

For the polynomial in Lemma~\ref{lemma:n>} we shall prefer the form given in equation~(\ref{eq:pre4}), substituting $U$ and $V$ for $n$ and $-n_{\vv}$ respectively:

\begin{lemma} \label{lemma:UVW}
Let $K$ be a cyclic cubic number field and $\uu, \vv \in \co^\times$ such that $\uu + \vv = n$ for some integer $n$. Assume that $n^3 + an^2 + bn = 0$. Then there exists an integer $W \in \Z$ such that 
\begin{equation} \label{eq:UVW}
    U^3 - WU^2V - (W+3)UV^2 - V^3 = W^2+3W+9
\end{equation}
\end{lemma}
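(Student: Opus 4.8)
The plan is to combine Lemma~\ref{lemma:n>_disc} with the cyclic hypothesis via Corollary~\ref{cor:mem}. By Lemma~\ref{lemma:n>_disc}, when $n^3+an^2+bn=0$ we have $n=U$ and
\[
\disc(f_\uu) = (UV)^2(U+V)^2 - 2(U-V)^3 + 6(U^3-V^3) - 27
\]
for integers $U,V\in\Z$ (with $V=-n_\vv$). Since $K$ is cyclic cubic and $\uu\in\co$, Corollary~\ref{cor:mem} forces this discriminant to be a perfect square in $\Z$. So the first step is simply to write $\disc(f_\uu)=W^2$ for some $W\in\Z_{\geq 0}$ and to rearrange the resulting Diophantine equation into the desired form. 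The substance of the lemma is that the rearrangement can be done so that $W$ appears only through the combination $W^2+3W+9$ on the right and linearly (in the coefficients $W$ and $W+3$) on the left.

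Concretely, I would expand and regroup the expression for $\disc(f_\uu)$ as a cubic form in $U,V$ plus a constant. Note $-2(U-V)^3 + 6(U^3-V^3) = -2(U-V)^3 + 6(U-V)(U^2+UV+V^2)$; factoring out $(U-V)$ gives $(U-V)\big(-2(U-V)^2 + 6(U^2+UV+V^2)\big) = (U-V)(4U^2 + 18UV + 4V^2) = 2(U-V)(2U^2+9UV+2V^2)$. Meanwhile $(UV)^2(U+V)^2$ is already a symmetric sextic. Setting this equal to $W^2$ and comparing with the target identity
\[
U^3 - WU^2V - (W+3)UV^2 - V^3 = W^2+3W+9,
\]
one sees the intended manipulation: move the $(UV)^2(U+V)^2$ term and the $-27$ to the other side, and recognize the left side $U^3 - WU^2V - (W+3)UV^2 - V^3$ as exactly $(U^3-V^3) - WUV(U+V) - 3UV^2$, i.e. a rewriting of $\disc(f_\uu)/(\text{something})$ once one has solved for $W$ in terms of $U,V$. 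The cleanest route is probably: from $\disc(f_\uu)=W^2$, solve the quadratic in $W$ (treating $UV(U+V)$ as the relevant quantity, since $(UV)^2(U+V)^2 = (UV(U+V))^2$), obtaining $W = UV(U+V) \pm (\text{lower order})$ or directly matching $W = U V (U+V)$-type relation; then substitute back and collect terms. I expect that after writing $W^2 = (UV(U+V))^2 + 2(U-V)(2U^2+9UV+2V^2) - 27$ one defines $W$ appropriately so that $W^2 - (UV(U+V))^2 = (W - UV(U+V))(W + UV(U+V))$ factors, and a short computation identifies $W$ with a specific linear-in-coefficients quantity, yielding (\ref{eq:UVW}) after dividing by the common factor.

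The main obstacle I anticipate is purely bookkeeping: correctly choosing the sign/normalization of $W$ (it must be the $W$ for which $\disc = W^2$, but possibly up to sign, and the two candidate factorizations must be reconciled) and verifying that the algebraic identity
\[
\big(U^3 - WU^2V - (W+3)UV^2 - V^3\big) - \big(W^2+3W+9\big)
\]
vanishes identically modulo the relation $\disc(f_\uu)=W^2$. This is elementary polynomial algebra, but it is the kind of computation where a single sign error propagates, so I would double-check it by expanding both sides completely in $U,V,W$ and confirming the difference lies in the ideal generated by $W^2 - \disc(f_\uu)$. No deep input beyond Corollary~\ref{cor:mem} and Lemma~\ref{lemma:n>_disc} is needed; the cyclic hypothesis enters only to guarantee $\disc(f_\uu)$ is a square, which is what licenses introducing the integer $W$ in the first place.
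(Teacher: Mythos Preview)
Your plan is correct and follows the same route as the paper: invoke Corollary~\ref{cor:mem} to make $\disc(f_\uu)$ a square, then exploit that it is close to $(UV(U+V))^2$. The paper's execution is a little more direct than your outline. Rather than setting $\disc(f_\uu)=W^2$ and then trying to factor $W^2-(UV(U+V))^2$, it writes the square root as $UV(U+V)+W'$ for some integer $W'$. Expanding $(UV(U+V)+W')^2=\disc(f_\uu)$ and cancelling the $(UV(U+V))^2$ term gives
\[
4U^3+(6-2W')U^2V-(6+2W')UV^2-4V^3=W'^2+27.
\]
A parity check forces $W'$ odd, so one substitutes $W'=2W+3$ and divides by $4$ to obtain~(\ref{eq:UVW}). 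The ``sign/normalization'' issue you anticipated is exactly this $W'=2W+3$ step; once you see that, the identity you were planning to verify in the ideal generated by $W^2-\disc(f_\uu)$ becomes a two-line computation rather than a bookkeeping hazard.
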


\begin{proof}
Recall from Lemma~\ref{lemma:n>_disc}, that $\disc(f_\uu) = (UV)^2(U+V)^2 - 2(U-V)^3 + 6(U^3-V^3) - 27$. Thus, as $\disc(f_\uu)$ is a square, there exists some $W' \in \Z$ such that
 \[
 (UV)^2(U+V)^2 - 2(U-V)^3 + 6(U^3-V^3) - 27 = (UV(U+V)+W' )^2.
 \]
After some rearrangement and cancellations, this gives us 
\[
4U^3 + (6-2W')U^2V - (6 + 2W')UV^2 - 4V^3=W'^2+27.
\]
By a standard congruence check, we see that $W'$ is odd, thus we can substitute $W' := 2W+3$ for some $W \in \Z$. By furthermore dividing through by $4$, we get the desired equation:
    $U^3 - WU^2V - (W+3)UV^2 - V^3 = W^2+3W+9$.
\end{proof}

\begin{theorem}[{\cite[Corollary~1.6]{Hoshi}}] \label{thm:hoshi}
    There are only finitely many integer triples $(U,V,W)$ such that $U^3 - WU^2V - (W+3)UV^2 - V^3 = W^2 + 3W + 9$. In particular, the only integer solutions $(U,V)$ with $U > 0$ (for some $W \in \Z)$ are 
    \begin{gather}
        (1,-3), (1,4), (2,1), (2,5), (3,-22), (3,-1), (3,0), \label{eq:UVsols} \\
        (4,-5), (5,-7), (5,-4), (7, -5), (19,3), \text{ and } (22, -3). \nonumber
    \end{gather}
    
\end{theorem}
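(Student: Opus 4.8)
The plan is to deduce this statement from Hoshi's deep classification results, which is why the theorem is attributed to \cite[Corollary~1.6]{Hoshi}; our task is to unwind how the stated equation fits into his framework and extract the explicit solution list. First I would observe that the cubic form on the left-hand side, $U^3 - WU^2V - (W+3)UV^2 - V^3$, is exactly (up to sign conventions) the norm form attached to Shanks' simplest cubic polynomial $f_W(x) = x^3 - Wx^2 - (W+3)x - 1$: if $\rho_W$ is a root, then $N_{K_W/\Q}(U - \rho_W V) = U^3 - WU^2V - (W+3)UV^2 - V^3$ once one accounts for the relation among the three roots $\rho_W$, $-1/(\rho_W+1)$, $-(\rho_W+1)/\rho_W$. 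Hoshi studies precisely the family of Thue equations $F_W(U,V) = \lambda$ for the simplest cubic form $F_W$, and the particular right-hand side $W^2 + 3W + 9 = \mathrm{disc}(f_W)^{1/2} \cdot (\text{unit factor})$ is not arbitrary: it equals $\tfrac{1}{\text{something}}$ times a natural invariant of the form. So the first real step is to match our normalization to Hoshi's, recording the exact change of variables (possibly $U \mapsto U$, $V \mapsto -V$, or a $\mathrm{GL}_2(\Z)$ substitution permuting the three linear factors) so that our equation becomes verbatim one of the equations whose complete solution set Hoshi tabulates.

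Having identified the equation with an instance in Hoshi's tables, I would then simply read off the finite list of solution triples $(U,V,W)$ and, from them, extract those with $U > 0$. Since the form $F_W$ has the symmetry group of order $3$ coming from $\Gal(K_W/\Q)$ acting on $(U,V)$, and since $(U,V) \mapsto (-U,-V)$ preserves the equation when the right-hand side is as given, the solutions come in orbits; restricting to $U > 0$ picks a representative from each orbit (discarding $U = 0$, which forces $V^3 = -(W^2+3W+9)$, impossible for integers since $W^2+3W+9$ is never a negative cube — indeed it is always positive). The thirteen pairs listed in \eqref{eq:UVsols} are then exactly these representatives; I would double-check each by direct substitution, solving the resulting linear-in-$W$ (or quadratic-in-$W$) equation to confirm an integer $W$ exists, e.g. $(U,V) = (1,4)$ gives $1 - 16W - 16(W+3) - 64 = W^2 + 3W + 9$, i.e. $-32W - 111 = W^2 + 3W + 9$, i.e. $W^2 + 35W + 120 = 0$, which I would verify has an integer root.

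The main obstacle is entirely bookkeeping rather than mathematics: Hoshi's corollary is stated for a specific normalization of the simplest cubic Thue equation, and the subtlety is to get the correspondence of parameters exactly right — in particular to confirm that \emph{every} $W \in \Z$ is covered (Hoshi's results typically handle all $W$ simultaneously via the theory of the family, not case-by-case), and that no solutions are lost or duplicated when passing between his $(U,V,W)$ and ours. A secondary point to be careful about: our derivation in Lemma~\ref{lemma:UVW} produced this equation from $\mathrm{disc}(f_\uu)$ being a perfect square, so I should confirm that the correspondence $W' = 2W+3$ and the division by $4$ were valid (the congruence check forcing $W'$ odd), which is already done in the excerpt. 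With the dictionary in hand, the finiteness is immediate from Hoshi and the explicit list follows by inspection; no independent Diophantine argument is needed here, so I would keep the proof short and cite \cite{Hoshi} for the hard input.
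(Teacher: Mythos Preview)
Your overall plan---reduce to Hoshi's Corollary~1.6 and read off the list---is exactly what the paper does, but the paper's execution is much shorter and your proposal contains a couple of concrete errors.

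The paper's entire proof is: observe that $(U,V,W)\mapsto(-V,-U,-W-3)$ sends solutions to solutions (check this directly on both sides of the equation), hence one may assume $W\ge -1$; then Hoshi's Corollary~1.6 applies verbatim. You never identify this particular involution, which is the one piece of bookkeeping actually needed, since Hoshi's result is stated for $W\ge -1$. Instead you invoke the symmetry $(U,V)\mapsto(-U,-V)$, but that does \emph{not} preserve the equation: the left-hand side is an odd cubic form, so it changes sign, while $W^2+3W+9>0$ for all $W\in\Z$.

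Your sanity check for $(U,V)=(1,4)$ also has an arithmetic slip: $-WU^2V=-4W$, not $-16W$, so the correct equation is $1-4W-16(W+3)-64=W^2+3W+9$, i.e.\ $W^2+23W+120=0$, with integer roots $W=-8,-15$; your quadratic $W^2+35W+120=0$ has no integer root. None of the anticipated normalization gymnastics (Galois $3$-cycle on $(U,V)$, $\mathrm{GL}_2(\Z)$ substitutions, etc.) is required---the form is already in Hoshi's normalization once $W\ge -1$.
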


\begin{proof}
    Note that if we have a solution $(U,V,W)$, then we also have the solution $(-V, -U, -W-3)$.  Thus we may assume $W \geq -1$. The claim then follows immediately from Corollary 1.6 of Hoshi \cite{Hoshi}. 
\end{proof}

\noindent \textbf{Remarks}.

\begin{itemize}
    \item It is worth observing that equation (\ref{eq:UVW}) can be rewritten in the form $$\mathrm{N}_{K_W/\Q}(U - V \rho_W) = \sqrt{\disc(K_W)},$$
    where $K_W$ is the simplest cubic field with defining polynomial $x^3 - Wx^2 - (W+3)x - 1$.   
    As all the primes dividing $\disc(K_W)$ are ramified over $K_W$, this implies that there exists a  unique ideal $I = (\rho_W^2 + \rho_W + 1) \vartriangleleft \co$ such that $\mathrm{N}_{K_W/\Q}(I) = \sqrt{\disc(K_W)}$. Therefore, the equation~(\ref{eq:UVW}) is thus equivalent to finding integers $U, V \in \Z$ such that  $U - V \rho_W = u (\rho_W^2 + \rho_W + 1)$ for some unit $u \in \co^\times$. In the case where $\co^\times = \Z[\rho_W]^\times$, as considered in \cite{VZ}, it is known that $\{\rho_W, \rho_W+1 \}$ form a system of fundamental units for $\co^\times$ \cite{Thomas79}. Thus one can rewrite equation~(\ref{eq:UVW}) as $U - V \rho_W = \pm \rho_W^i (\rho_W+1)^j (\rho_W^2 + \rho_W + 1)$ for some integers $i, j \in \Z$, therefore reducing the problem to determining for which pairs of integers $(i,j)$ the $\rho_W^2$-coefficient of the expression  $\rho_W^i (\rho_W+1)^j (\rho_W^2 + \rho_W + 1)$ is zero.    \\

    \item We should also remark that the more general Thue equation  $\mathrm{N}_{K_W/\Q}(U - V \rho_W) = \lambda$ has been well-studied for many small values of $\lambda$.  Thomas \cite{Thomas90} and Mignotte \cite{Mignotte} classified all solutions to $\mathrm{N}_{K_W/\Q}(U - V \rho_W) = \pm 1$.  Mignotte--Peth\"{o}--Lemmermeyer \cite{MPL} and Lemmermeyer--Peth\"{o} \cite{LemmermeyerPetho} more generally classified all solutions to the Thue inequality $| \mathrm{N}_{K_W/\Q}(U - V \rho_W) | \leq  2W+3$.  Similar Thue inequalities were also studied by Lettl--Peth\"{o}--Voutier \cite{LettlPethoVoutier} and  Xia--Chen--Zhang \cite{XCZ}.
    
\end{itemize}

We can now prove our main Theorem~\ref{thm:maincyclic} classifying unit equations over cyclic cubic fields:

\subsection*{Proof of Theorem~\ref{thm:maincyclic}}
    Let $K$ be a cubic cyclic field, $n \in \Z$ and unit $\uu, \vv \in \co^\times$ such that $\uu + \vv = n$.  As before, if $n = 0$ or $\uu, \vv \in \Q$, then $(\uu, \vv)$ falls inside case (1) in the theorem statement.  

    Now assume $n \neq 0$ and $\uu, \vv \nin \Q$.  If $n^2 + an + b \neq 0$, then $f_\uu(x)$ and $f_{\vv}(x)$ are given in the form as shown in Lemma~\ref{lemma:n12}, with the possible values for $a$ given by Lemma~\ref{lemma:n12_disc}. A summary of the possible sporadic solutions in this case (excluding the trivial family of solutions equivalent to $(-\rho_a, \rho_a+1)$ over the simplest cubic fields $K_a$) is given in Table~\ref{tab:casen12}.

\begin{table}[h]
\centering
\caption{In the case $n^3 + an^2 + bn \neq 0$, we tabulate the possibilities for the  integer $n$, the integer $a$, and the corresponding minimal polynomial  $f_{\uu}(x)$  and number field $K$, given the possible pairs $(n,a)$ from Lemma~\ref{lemma:n12_cyclic}.  This table also includes the minimal polynomials for $\delta = n - \eps$; i.e. if $f(x)$ is listed with the integer $n$, then $-f(n-x)$ is also listed with the integer $n$.}
\label{tab:casen12}
\begin{tabular}{@{}cccc@{}}
\toprule
$n$ & $a$ & $f_{\uu}(x)$ & $K$ \\ \midrule

$1$ & $-5$ & $ x^3 + 2x^2 - x - 1$ &  $K_{-1}$  \\
$1$  & $2$  & $x^3 - 5x^2 + 6x - 1$  & $K_{-1}$ \\
$2$  &  $-10$   &  $x^3 + 4x^2 - 11x - 1$  &  $K_{-1}$  \\

$2$  &  $-6$  &   $x^3 - 3x - 1$   & $K_{0}$  \\

$2$  &  $-4$   &  $ x^3 - 2x^2 - x + 1$  &  $K_{-1}$ \\

$2$  &  $-2$   & $  x^3 - 4x^2 + 3x + 1$  &  $K_{-1}$  \\

$2$  &  $0$   &  $x^3 - 6x^2 + 9x - 1$  &  $K_0$  \\

$2$  &  $4$  &  $ x^3 - 10x^2 + 17x - 1$  &  $K_{-1}$   \\

\bottomrule
\end{tabular}%
\end{table}

    If $n^3 + an^2 + bn = 0$, then Lemma~\ref{lemma:n>} implies that $f_\vv(x) = x^3 - (n + n_\vv)x^2 + n n_\vv + 1$ for some $n, n_\vv \in \Z$.  Moreover, a combination of Lemma~\ref{lemma:UVW} and Theorem~\ref{thm:hoshi} implies that there are only finitely many possibilities for $(n, n_\vv)$ given the possibilities for $(U,V)$ listed in (\ref{eq:UVsols}).  We summarise these solutions in Table \ref{tab:UVs}.

\begin{table}[h]
\centering
\caption{In the case $n^3 + an^2 + bn = 0$, we tabulate the possibilities for the minimal polynomial $f_{\uu}(x)$, the integer $n$, and number field $K$, given the possible pairs $(U,V)$ from (\ref{eq:UVsols}).}
\label{tab:UVs}
\begin{tabular}{@{}cccc@{}}
\toprule
$(U, V)$ & $f_{\uu}(x)$ & $n$ & $K$ \\ \midrule

$(1, -3)$  &  $x^3 + x^2 - 2x - 1$  & $1$ & $K_{-1}$ \\
$(1, 4)$ &  $x^3 - 6x^2 + 5x - 1$   & $1$ & $K_{-1}$ \\

$(2, 1)$ &  $x^3 - 5x^2 + 6x - 1$  & $2$ &  $K_{-1}$\\
$(2, 5)$ &  $x^3 - 9x^2 + 14x - 1$  &  $2$  & $K_1$ \\
$(3, -22)$ &  $x^3 + 16x^2 - 57x - 1$   & $3$ & $K_{-1}$\\ 
$(3, -1)$ &  $x^3 - 5x^2 + 6x - 1$   & $3$ & $K_{-1}$ \\
$(3, 0)$ &  $x^3 - 6x^2 + 9x - 1$  & $3$ & $K_0$ \\
$(4, -5)$ &   $x^3 - 3x^2 - 4x - 1$ & $4$ & $K_{-1}$\\
$(5, -7)$ &  $x^3 - 3x^2 - 10x - 1$  & $5$ & $K_1$ \\ 
$(5,-4)$  &  $x^3 - 6x^2 + 5x - 1$  & $5$ & $K_{-1}$ \\
$(7, -5)$ &  $x^3 - 9x^2 + 14x - 1$  & $7$ &  $K_1$ \\ 
$(19, 3)$ &  $x^3 - 41x^2 + 418x - 1$  & $19$ & $K_{-1}$\\
$(22, -3)$ &  $x^3 - 41x^2 + 418x - 1$  & $22$ & $K_{-1}$ \\

\bottomrule
\end{tabular}%
\end{table}

Combining all solutions from both cases gives us the $\numcyclicsporadic$ equivalence classes of solutions shown in Table~\ref{tab:cyclicsolutions}, thus proving that $(\uu, \vv)$ falls inside either case (2) or (3) of the theorem statement.
\qed

\bigskip
\noindent \textbf{Remark}.
\begin{itemize}
    \item We note that all sporadic solutions lie in the three simplest cubic fields $K_{-1}$, $K_0$, and $K_1$. As all three fields are monogenic with unit group $\co^\times = \Z[\rho_a]^\times$, all such solutions were thus already found by Vukusic--Ziegler \cite{VZ}.  We also remark that equivalent sporadic solutions can be found in the simplest cubic fields $K_a$ for $a \in \{3, 5, 12, 54, 66, 1259\} $ as we have the isomorphic fields $K_{-1} = K_5 = K_{12} = K_{1259}$, $K_0 = K_3 = K_{54}$, and $K_1 = K_{66}$ (e.g. see \cite[p.~2137]{Hoshi}).

\end{itemize}

\subsection{Table of sporadic solutions}

Here, we give a full table listing all the non-trivial solutions to $\uu + \vv = n$ over all cyclic cubic fields $K$, shown in Table~\ref{tab:cyclicsolutions}.  For brevity, we only list solutions up to equivalence (as defined in \cite[p.~706]{VZ}).  We recall that a solution $(\uu, \vv)$ to the unit equation $\uu + \vv = n$ over a cubic cyclic field $K$ is \emph{non-trivial} if $n \neq 0$, $\uu, \vv \nin \Q$, and $(\uu, \vv)$ is not equivalent to $(-\rho_a, \rho_a + 1)$ for any $a \geq -1$.

\begin{table}[h]
\centering
\caption{List of all non-trivial sporadic solutions to $\uu + \vv = n$ over all cyclic cubic fields $K$, given up to equivalence. All sporadic solutions lie inside the three simplest cubic fields $K_{-1}$, $K_0$, and $K_1$.  Here $\rho$ denotes a root of the given defining polynomial for $K$, and $\Delta_K$ denotes the discriminant of $K$.}
\label{tab:cyclicsolutions}
\begin{tabular}{@{}cccc@{}}
\toprule
\textbf{Defining polynomial for $K$} & \textbf{$\uu$} & \textbf{$\vv$} & \textbf{$n$} \\ \midrule

$f_{-1}(x) = x^3 + x^2 - 2x - 1$  &  
$\rho $  &  $-\rho+1$   &  $1$ \\

($\Delta_K = 7^2$) &   
$\rho^2$  & $-\rho^2 + 1$  & $1$  \\
  &  $\rho^2 - \rho$ & $-\rho^2 + \rho + 1$  & $1$ \\
 &  $-\rho$ & $\rho + 2$  &  $2$ \\
 &  $\rho + 1 $ & $-\rho + 1$  &  $2$ \\
 &  $3\rho^2 + \rho - 6 $ & $-3\rho^2 - \rho + 8$  &  $2$ \\
 &  $\rho^2 $ & $-\rho^2 + 3$  &  $3$ \\
 &  $4 \rho^2 - 5\rho$ & $-4\rho^2 + 5\rho + 3$  &  $3$ \\
 &  $\rho^2 + 2\rho$ & $-\rho^2 - 2\rho + 4$  &  $4$ \\
 &  $\rho^2 - \rho $ & $-\rho^2 + \rho + 5$  &  $5$ \\
 &  $5\rho^2 + 9\rho$ & $-5\rho^2 - 9\rho + 19$   &  $19$ \\
 &  $4\rho^2 - 5\rho$ & $-4\rho^2 + 5\rho + 22$  &  $22$ \\[5mm]

\midrule

  $f_{0}(x) =  x^3 - 3x - 1$  &  
$\rho$ & $-\rho+2$ & 2  \\
($\Delta_K = 3^4$) &   $\rho^2 $  &  $-\rho^2 + 3$ & 3 \\[5mm]

\midrule

    $f_{1}(x) =  x^3 - x^2 - 4x - 1$  &  
$\rho^2 $  &  $-\rho^2 + 2$   &  $2$ \\
($\Delta_K = 13^2$)  &  $\rho^2 - 2$   &  $-\rho^2 + 7$  & $5$ \\
  & $\rho^2$ &  $-\rho^2 + 7$  & $7$ \\[5mm]

\bottomrule
\end{tabular}%
\end{table}

\section{The complex cubic fields} \label{sec:complexcubic}

In this section, we now restrict to the case where $K$ is a complex cubic field, and thus has negative discriminant. Using a strategy similar to the cyclic cubic case, we use Corollary~\ref{cor:mem} to explicitly obtain the possible minimal polynomials $f_\uu(x)$ and $f_\vv(x)$ whose discriminants are negative, again using both Lemma~\ref{lemma:n12_disc} and Lemma~\ref{lemma:n>_disc} to handle the two cases $n^3 + an^2 + bn \neq 0$ and $n^3 + an^2 + bn = 0$ respectively.

\begin{lemma} \label{lemma:n12_complex}
    Let $K$ be a complex cubic number field and assume $n^3 + an^2 + bn \neq 0$. Then either $n = 1$ and $a \in \{-4, -3, -2, -1, 0, 1\}$, or $n = 2$ and $a \in \{-5, -4, -2, -1\}$.
\end{lemma}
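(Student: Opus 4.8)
The plan is to apply Corollary~\ref{cor:mem} in the complex cubic case, which requires that $\disc(f_\uu) = \disc(f_\vv)$ be a \emph{negative} integer, and intersect this condition with the explicit discriminant formulas from Lemma~\ref{lemma:n12_disc}. By Lemma~\ref{lemma:n12}, the hypothesis $n^3 + an^2 + bn \neq 0$ forces $n = 1$ or $n = 2$, so there are exactly four discriminant polynomials in $a$ to analyze, one for each of the two sub-cases in each of $n = 1$ and $n = 2$. For each, I would solve the inequality $\disc < 0$ over the integers.

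First, for $n = 1$: the second sub-case gives $\disc = (a^2 + 3a + 9)^2 \geq 0$, which is never negative (indeed the discriminant is a square, consistent with these being the cyclic simplest cubic fields), so this sub-case contributes nothing. For the first sub-case, $\disc(f_\uu) = a^4 + 6a^3 + 7a^2 - 6a - 31 = (a^2 + 3a - 1)^2 - 32$; requiring this to be negative means $(a^2+3a-1)^2 < 32$, i.e. $|a^2 + 3a - 1| \leq 5$, which is a bounded quadratic inequality in $a$ and a direct check yields $a \in \{-4, -3, -2, -1, 0, 1\}$. Next, for $n = 2$: in the first sub-case $\disc = (4a^2 + 24a + 9)(a+3)^2$, and since $(a+3)^2 \geq 0$, negativity requires either $a \neq -3$ with $4a^2 + 24a + 9 < 0$, i.e. $(2a+6)^2 < 27$, giving $|2a + 6| \leq 5$ and hence $a \in \{-5, -4, -2, -1\}$ (excluding $a = -3$, where the discriminant vanishes). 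For the second sub-case, $\disc = 4a^4 + 48a^3 + 229a^2 + 510a + 473$; completing the square as in Lemma~\ref{lemma:n12_cyclic} writes this as $\left(2a^2 + 12a + \tfrac{85}{4}\right)^2 + \tfrac{343}{16}$, which is manifestly positive for all real $a$, so this sub-case contributes nothing.

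Collecting the surviving cases gives exactly $n = 1$ with $a \in \{-4, -3, -2, -1, 0, 1\}$ and $n = 2$ with $a \in \{-5, -4, -2, -1\}$, as claimed. I do not anticipate any genuine obstacle here: unlike the cyclic case, where one must show a quartic represents a square only finitely often (invoking Siegel's theorem and a difference-of-squares argument), the negativity condition is an \emph{inequality}, which immediately bounds $a$ without any deep input — the only mild subtlety is correctly handling the factors $(a+3)^2$ and $(a^2+3a+9)^2$ that can make the discriminant nonnegative regardless of the other factor, and remembering to exclude the degenerate values of $a$ where $\disc = 0$ (which would correspond to $f_\uu$ having a repeated root and hence not defining a cubic field at all). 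One should also double-check that these values of $a$ genuinely yield complex cubic fields (rather than, say, reducible polynomials), but that verification is naturally deferred to the subsequent analysis in this section, just as the analogous check was deferred in the cyclic case.
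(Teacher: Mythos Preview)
Your proposal is correct and follows essentially the same approach as the paper: both arguments use Corollary~\ref{cor:mem} to require $\disc(f_\uu) < 0$ and then check each of the four discriminant polynomials from Lemma~\ref{lemma:n12_disc}, observing that two are manifestly nonnegative (a perfect square and a quartic with no real roots) while the other two yield bounded inequalities in $a$. Your write-up is in fact more detailed than the paper's, which simply asserts the resulting ranges for $a$ without spelling out the completing-the-square steps.
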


\begin{proof}
    For $n = 1$, we note that the polynomial $a^4 + 6a^3 + 7a^2 - 6a - 31$ is negative only if $a \in \{-4, -3, -2, -1, 0, 1\}$ and the polynomial $(a^2 + 3a + 9)^2$ is clearly never negative.
    
    For $n = 2$, we note that the polynomial $(4a^2 + 24a + 9)(a+3)^2$ is negative only if $a \in \{-5,-4, -2, -1\}$.  We can also check that the polynomial $4a^4 + 48a^3 + 229a^2 + 510a + 473$ has no real roots and is thus always positive.
\end{proof}

\begin{table}[h]
\centering
\caption{In the case $K$ is complex and  $n^3 + an^2 + bn \neq 0$, we tabulate the possibilities for the  integer $n$, the integer $a$, and the corresponding minimal polynomial  $f_{\uu}(x)$  and number field discriminant $\Delta_K$, given the possible pairs $(n,a)$ from Lemma~\ref{lemma:n12_complex}.  This table also includes the minimal polynomials for $\delta = n - \eps$; i.e. if $f(x)$ is listed with the integer $n$, then $-f(n-x)$ is also listed with the integer $n$.}
\label{tab:casen12complex}
\begin{tabular}{@{}cccc@{}}
\toprule
$n$ & $a$ & $f_{\uu}(x)$ & $\Delta_K$ \\ \midrule

$1$ & $-4$ & $x^3 + x^2 - 1$  & $-23$   \\
$1$ & $-3$ &  $x^3 + x - 1 $ & $-31$   \\
$1$ & $-2$ &   $x^3 - x^2 + 2x - 1$  &  $-23$  \\
$1$ & $-1$ &  $x^3 - 2x^2 + 3x - 1$ &   $-23$ \\
$1$ & $0$ &  $x^3 - 3x^2 + 4x - 1$ &  $-31$  \\
$1$ & $1$ &  $x^3 - 4x^2 + 5x - 1$    &  $-23$  \\

$2$ & $-5$ &  $x^3 - x^2 - x - 1 $   &   $-44$  \\
$2$ & $-4$ &   $x^3 - 2x^2 + x - 1$ &  $-23$  \\
$2$ & $-2$ & $ x^3 - 4x^2 + 5x - 1$ &  $-23$  \\
$2$ & $-1$ &  $x^3 - 5x^2 + 7x - 1$ &  $-44$  \\

\bottomrule
\end{tabular}%
\end{table}

\begin{lemma} \label{lemma:UV_complex}
    Let $U$ be a positive integer and $V \in \Z$ such that $(UV)^2(U+V)^2 - 2(U-V)^3 + 6(U^3-V^3) - 27$ is negative.  Then either $V = -U$ or $(U,V)$ equals one of the 10 following pairs:
    \begin{gather*}
        (1,-2), (1,0), (1, 1), (1, 2), (1, 3), (2, -3), (2, -1), (3, -4), (3, -2), (4, -3).
    \end{gather*} 
\end{lemma}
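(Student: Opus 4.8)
The plan is to combine Corollary~\ref{cor:mem} with Lemma~\ref{lemma:n>_disc}: writing
$P(U,V) := (UV)^2(U+V)^2 - 2(U-V)^3 + 6(U^3-V^3) - 27$, the hypothesis is precisely $P(U,V) < 0$, and we must show this forces $V = -U$ or one of the ten listed pairs. The first step is to expand the cubic part and record the factorisation
\[
    P(U,V) = U^2 V^2 (U+V)^2 + 2(U-V)(2U+V)(U+2V) - 27,
\]
which follows from the identity $4U^3 + 6U^2V - 6UV^2 - 4V^3 = 2(U-V)(2U+V)(U+2V)$. Two degenerate cases are then immediate: if $U+V = 0$ we are in the alternative $V = -U$; and if $V = 0$ then $P(U,0) = 4U^3 - 27$, which is negative only for $U=1$, giving the pair $(1,0)$.

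For the remaining pairs $U$, $V$, $U+V$ are all nonzero, and I would split on the sign of the cubic factor $c(U,V) := (U-V)(2U+V)(U+2V)$, which as a polynomial in $V$ has the three real roots $-2U < -U/2 < U$. If $c(U,V) \geq 0$ (i.e. $V \le -2U$ or $-U/2 \le V \le U$), then since $|V| \geq 1$ and $|U+V| \geq 1$ we get $P(U,V) \geq U^2V^2(U+V)^2 - 27 \geq U^2 - 27$, forcing $U \le 5$; moreover when $V \le -2U$ one has $|V| \geq 2U$ and $|U+V| \geq U$, so $P \geq 4U^6 - 27$, which is nonnegative already for $U \geq 2$. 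Hence only finitely many pairs survive in this case, and a short direct check over $U \in \{1,\dots,5\}$ with the bounded range of $V$ recovers $(1,1)$, $(1,-2)$ and $(2,-1)$.

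The case $c(U,V) < 0$ splits further into $V > U$ and $-2U < V < -U/2$. For $V > U$, substituting $V = U+s$ with $s \geq 1$ gives $P = U^2(U+s)^2(2U+s)^2 - 2s(3U+s)(3U+2s) - 27$; expanding and collecting by powers of $s$ yields $P = U^2 s^4 + (6U^3-4)s^3 + (13U^4-18U)s^2 + (12U^5-18U^2)s + (4U^6-27)$, whose every coefficient (constant term included) is positive once $U \geq 2$, so there is no solution with $U \geq 2$; while for $U = 1$ one is reduced to $s^4 + 2s^3 - 5s^2 - 6s - 23 < 0$, which holds exactly for $s \in \{1,2\}$, giving $(1,2)$ and $(1,3)$.

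The last region $-2U < V < -U/2$ (with $V \neq -U$) is the one I expect to be the main obstacle, since here $c(U,V)$ is negative and may be large, so the dominant term $U^2V^2(U+V)^2$ does not automatically win. To handle it I would first exploit the involution $(U,V) \mapsto (-V,-U)$, which preserves both $P$ and this region and whose only fixed point is $V = -U$; within each orbit exactly one member has $U+V > 0$, so it suffices to treat $0 < r := U+V < U/2$ (which forces $U \geq 3$). Writing $V = r-U$ gives $P = U^2 r^2 (U-r)^2 - 2(2U-r)(U+r)(U-2r) - 27$ with all three cubic factors positive in this range, and the crude bounds $(U-r)^2 > U^2/4$ from below and $(2U-r)(U+r)(U-2r) < 3U^3$ from above reduce $P < 0$ to $U^4 - 24U^3 - 108 < 0$, hence $U \leq 24$. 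A finite search over $3 \le U \le 24$ and $1 \le r < U/2$ then produces only $(3,-2)$ and $(4,-3)$, and applying the involution recovers their partners $(2,-3)$ and $(3,-4)$. Collecting the contributions of all cases gives exactly the ten listed pairs, which completes the proof.
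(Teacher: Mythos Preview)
Your proof is correct, and it takes a genuinely different route from the paper's.

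The paper treats $P(U,V)$ as a quartic $F_U(z)$ in $z=V$: for $U\in\{1,2,3,4\}$ it computes the real roots of $F_U$ directly and reads off the integer solutions, while for $U\ge 5$ it evaluates $F_U$ at the six integers $-U-1,\,-U,\,-U+1,\,-1,\,0,\,1$, observes the alternating inequalities force the two local minima of $F_U$ into the intervals $(-U-1,-U+1)$ and $(-1,1)$, and then notes that $F_U(0)=4U^3-27>0$ leaves $z=-U$ as the only integer with $F_U(z)<0$.

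You instead exploit the algebraic factorisation $P(U,V)=U^2V^2(U+V)^2+2(U-V)(2U+V)(U+2V)-27$ and split on the sign of the cubic factor $c=(U-V)(2U+V)(U+2V)$. When $c\ge 0$ the single inequality $U^2V^2(U+V)^2\le 27$ already bounds both $U$ and $V$, and in the negative region you separate $V>U$ (where the substitution $V=U+s$ makes every coefficient positive for $U\ge 2$) from $-2U<V<-U/2$, where the involution $(U,V)\mapsto(-V,-U)$ halves the work and your crude estimates $(U-r)^2>U^2/4$ and $(2U-r)(U+r)(U-2r)<3U^3$ give $U\le 24$.

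The trade-off: the paper's argument is tighter (it cuts off at $U=4$ rather than $U=24$) and handles all $V$ uniformly once the minima are located, whereas your factorisation-plus-involution makes the structure of $P$ and the symmetry $(U,V)\leftrightarrow(-V,-U)$ visible and avoids any calculus-style reasoning about local minima; the price is a slightly larger (but still trivial) finite search at the end.
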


\begin{proof}
     Define the polynomial 
    \begin{equation*}
        F_U(z) := U^2z^4 + (2U^3 - 4)z^3 + (U^4 - 6U)z^2 + (6U^2)z + 4U^3 - 27 .
    \end{equation*}
    Our problem is therefore to find all integers $V$ such that $F_U(V) < 0$. Let's first consider the polynomials $F_U(z)$ for $U \in \{1, 2, 3, 4\}$:
    \begin{gather*}
        F_1(z) = z^4 - 2z^3 - 5z^2 + 6z - 23, \quad  F_2(z) = 4z^4 + 12z^3 + 4z^2 + 24z + 5, \\
        F_3(z) = 9z^4 + 50z^3 + 63z^2 + 54z + 81, \quad F_4(z) = 16z^4 + 124z^3 + 232z^2 + 96z + 229.
    \end{gather*}
    By doing a standard computation of the real roots of each of the above four polynomials, we observe that $F_1(z) < 0$ for integers $z \in \{-2, -1, 0, 1, 2, 3\}$, $F_2(z) < 0$ for the integers $z \in \{-3, -2, -1\}$, $F_3(z) < 0$ for the integers $z \in \{-4, -3, -2\}$, and $F_4(z) < 0$ for the integers $z \in \{-4, -3\}$.

    Now let's assume $U > 4$.  We claim that the only integer $z$ such that $F_U(z) < 0$ is $z = -U$.  
      As $F_U(z)$ is a quartic polynomial with positive leading coefficient, it has (at most) two local minima.  We now note the following integer values of $F_U(z)$ at $z \in \{-U-1, -U, -U+1, -1, 0, 1\}$:
    \begin{gather*}
        F_U(-U-1) = U^4 - 2U^3 - 5U^2 + 6U - 23, \quad F_U(-U) = -4U^3 - 27, \\
        F_U(-U+1) = U^4 - 6U^3 + 7U^2 + 6U - 31,  \quad       F_U(-1) = U^4 + 2U^3 - 5U^2 - 6U - 23, \\
        F_U(0) = 4U^3 - 27, \quad F_U(1) = U^4 + 6U^3 + 7U^2 - 6U - 31.
    \end{gather*}
    By a standard computation, we observe that $F_U(-U-1) > F_U(-U)$ and $F_U(-U) < F_U(-U+1)$, and also have that $F_U(-1) > F_U(0)$ and $F_U(0) < F_U(1)$ for all $U > 4$.  Thus, the two local minima of $F_U$ must lie within the intervals $(-U-1, -U+1)$ and $(-1, 1)$ respectively.  As $F_U(0) > 0$, we therefore obtain that the only integer value of $z$ such that $F_U(z) < 0$ is $z = -U$.
\end{proof}

\begin{lemma} \label{lemma:n>_complex}
    Let $K$ be a complex cubic number field with negative discriminant and assume $n^3 + an^2 + bn = 0$.  Then either $K = L_n$ and $(\uu, \vv)$ is equivalent to $(\omega_n, -\omega_n + n)$ or $(\uu, \vv)$ is equivalent to a finite number of sporadic solutions.
\end{lemma}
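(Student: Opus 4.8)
The plan is to feed Lemma~\ref{lemma:n>}, Lemma~\ref{lemma:n>_disc} and Corollary~\ref{cor:mem} into the (already proven) Lemma~\ref{lemma:UV_complex}. We are in the case $n^3+an^2+bn=0$ with $n>0$ and $\uu,\vv\notin\Q$, so by Lemma~\ref{lemma:n>}, after assuming without loss of generality that $f_\vv(0)=1$ and hence $f_\uu(0)=-1$, we have
$$f_\uu(x)=x^3-(n+n_\uu)x^2+nn_\uu x-1,\qquad f_\vv(x)=x^3-(n+n_\vv)x^2+nn_\vv x+1,$$
with $n=n_\uu+n_\vv$. Since $K$ is complex, Corollary~\ref{cor:mem} forces $\disc(f_\uu)<0$, and by Lemma~\ref{lemma:n>_disc} written in the form of equation~(\ref{eq:pre4}) — setting $U:=n$ and $V:=-n_\vv$ — this is exactly the inequality
$$(UV)^2(U+V)^2-2(U-V)^3+6(U^3-V^3)-27<0,$$
with $U=n$ a positive integer. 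Lemma~\ref{lemma:UV_complex} then leaves only two possibilities: either $V=-U$, or $(U,V)$ is one of the ten explicitly listed pairs.

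First I would handle the case $V=-U$, i.e. $n_\vv=n$ and $n_\uu=0$. Substituting into the formula above collapses $f_\uu$ to $f_\uu(x)=x^3-nx^2-1=g_n(x)$, the defining polynomial of $L_n$. This polynomial is irreducible (its only possible rational roots are $\pm1$, and $g_n(1)=-n\neq0$, $g_n(-1)=-n-2\neq0$ for $n\geq1$) with discriminant $-4n^3-27<0$, so $K=\Q(\uu)\cong L_n$. As $L_n$ is not Galois over $\Q$, it contains exactly one root of $g_n$, so $\uu$ corresponds to $\omega_n$ under this isomorphism, whence $\vv=n-\uu$ corresponds to $-\omega_n+n$; thus $(\uu,\vv)$ is equivalent to $(\omega_n,-\omega_n+n)$. (If instead $f_\vv(0)=-1$, the symmetric argument puts $\vv$ in the role of $\omega_n$, which gives the same conclusion after the swap built into the equivalence relation.)

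For the remaining ten pairs $(U,V)$, each one determines $n=U$, $n_\vv=-V$, $n_\uu=U+V$, and hence the minimal polynomials $f_\uu$ and $f_\vv$ completely, so I would simply enumerate them. For each pair, $\disc(f_\uu)<0$ already guarantees, once $f_\uu$ is seen to be irreducible (rational root test), that $\Q(\uu)$ is a complex cubic field, and one records the resulting sporadic solution $(\uu,\vv)$; as there are only finitely many pairs, only finitely many sporadic solutions arise. The precise enumeration — which turns out to land entirely inside $L_{-1}$ and $L_1$ — together with the table of solutions is then a routine finite computation, deferred to the proof of Theorem~\ref{thm:maincomplex}.

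The genuinely hard input, namely finiteness of the integer solutions of the quartic inequalities $F_U(V)<0$, has already been dispatched in Lemma~\ref{lemma:UV_complex}, so the only real care needed here is the bookkeeping around the substitution $(U,V)=(n,-n_\vv)$: in particular, verifying that $V=-U$ is exactly the fibre collapsing $f_\uu$ to $g_n$, and using the fact that a complex cubic field, not being Galois over $\Q$, contains a unique root of $g_n$ in order to pin down $\uu=\omega_n$ rather than one of its conjugates.
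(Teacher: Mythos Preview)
Your proof is correct and follows essentially the same approach as the paper: invoke Corollary~\ref{cor:mem} to force $\disc(f_\uu)<0$, rewrite via Lemma~\ref{lemma:n>_disc} in the variables $(U,V)=(n,-n_\vv)$, and then apply Lemma~\ref{lemma:UV_complex}. You are in fact more thorough than the paper, which leaves the verification that $V=-U$ collapses $f_\uu$ to $g_n$ (and hence $K=L_n$) implicit; your explicit handling of this case, including the irreducibility check and the observation that a non-Galois cubic contains only one root of $g_n$, is a welcome addition.
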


\begin{proof}
    Again, by Corollary~\ref{cor:mem}, we must have that $\disc(f_\uu) < 0$, and Lemma~\ref{lemma:n>_disc} thus reduces the problem to finding all pairs of integers $(U,V)$ such that $(UV)^2(U+V)^2 - 2(U-V)^3 + 6(U^3-V^3) - 27$ is negative. As $n > 0$, we may assume $U > 0$. We therefore apply Lemma~\ref{lemma:UV_complex} to finish the proof.
\end{proof}

\begin{table}[h]
\centering
\caption{In the case $K$ is complex and $n^3 + an^2 + bn = 0$, we tabulate the possibilities for the minimal polynomial $f_{\uu}(x)$, the integer $n$, and number field discriminant $\Delta_K$, given the possible pairs $(U,V)$ from Lemma~\ref{lemma:n>_complex}.}
\label{tab:complexUVs}
\begin{tabular}{@{}cccc@{}}
\toprule
$(U, V)$ & $f_{\uu}(x)$ & $n$ & $\Delta_K$ \\ \midrule

$(1, -2)$  & $x^3 - x - 1$ & 1 & $-23$ \\
$(1, 0)$  & $x^3 - 2x^2 + x - 1$ & $1$ & $-23$ \\
$(1, 1)$  & $x^3 - 3x^2 + 2x - 1$ & $1$ &  $-23$\\
$(1, 2)$  & $x^3 - 4x^2 + 3x - 1$ & $1$ &  $-31$ \\
$(1, 3)$  & $x^3 - 5x^2 + 4x - 1$ & $1$ &  $-23$ \\
$(2, -3)$  & $x^3 - x^2 - 2x - 1$ & $2$ &  $-31$ \\
$(2, -1)$  & $x^3 - 3x^2 + 2x - 1$ & $2$ &  $-23$ \\
$(3, -4)$  & $x^3 - 2x^2 - 3x - 1$ & $3$ &  $-23$ \\
$(3, -2)$  & $x^3 - 4x^2 + 3x - 1$  & $3$ &  $-31$ \\
$(4, -3)$  & $x^3 - 5x^2 + 4x - 1$ & $4$ &  $-23$ \\
\bottomrule
\end{tabular}%
\end{table}

\subsection*{Proof of Theorem~\ref{thm:maincomplex}}
 Let $K$ be a complex cubic field, $n \in \Z$ and units $\uu, \vv \in \co^\times$ such that $\uu + \vv = n$.  As before, if $n = 0$ or $\uu, \vv \in \Q$, then $(\uu, \vv)$ falls inside case (1) in the theorem statement.  

Now assume $n \neq 0$ and $\uu, \vv \nin \Q$.  As with the cyclic cubic case, we can similarly just apply Lemma~\ref{lemma:n12_complex} to handle the $n^3 + an^2 + bn \neq 0$ case and Lemma~\ref{lemma:n>_complex} to handle the $n^3 + an^2 + bn = 0$ case.  Combining all  solutions from both cases gives us the $\numcomplexsporadic$ equivalence classes of sporadic solutions shown in Table~\ref{tab:complexsolutions}.

Finally, to explicitly compute $\NK$ for some complex cubic field $K$, it suffices to determine a list of all integers $b \geq -1$ such that $K$ is isomorphic to $L_b$.  This computation is done for $K = L_a$ all $a = -1, \dots, 1000$,  as shown in Proposition~\ref{prop:computeNKcomplex}. This yields the computation of $\NK$ as shown in (\ref{eq:NKcomplex}).
\qed

\subsection{Table of sporadic solutions}
Here, we give a full table listing all the non-trivial solutions to $\uu + \vv = n$ over all complex cubic fields $K$, shown in Table~\ref{tab:complexsolutions}.  As with the cyclic cubic solutions, we only list solutions up to equivalence.  We recall that a solution $(\uu, \vv)$ to the unit equation $\uu + \vv = n$ over a complex cyclic field $K$ is \emph{non-trivial} if $n \neq 0$, $\uu, \vv \nin \Q$, and $(\uu, \vv)$ is not equivalent to $(\omega_a, -\omega_a + a)$ for any $a \geq -1$.

\begin{table}[h]
\centering
\caption{List of all non-trivial sporadic solutions to $\uu + \vv = n$ over all complex cubic fields $K$, given up to equivalence. All sporadic solutions lie inside the three complex cubic fields $L_{-1}$, $L_1$, and $\Q[x]/(x^3 - x^2 - x - 1)$.  Here $\omega$ denotes a root of the given defining polynomial for $K$, and $\Delta_K$ denotes the discriminant of $K$.}
\label{tab:complexsolutions}
\begin{tabular}{@{}cccc@{}}
\toprule
\textbf{Defining polynomial for $K$} & \textbf{$\uu$} & \textbf{$\vv$} & \textbf{$n$} \\ \midrule

$g_{-1}(x) = x^3 + x^2 - 1$  &  
$\omega$  &  $-\omega + 1$   &  $1$ \\

($\Delta_K = -23$) &   
$\omega^2$ & $-\omega^2 + 1$  & $1$  \\
 &  $\omega^2 + \omega$ & $-\omega^2 - \omega + 1$  & $1$ \\
 &  $-\omega^2 - \omega$ & $\omega^2 + \omega + 1$  &  $1$\\
 &  $\omega^2 + 2\omega + 2$ &  $-\omega^2 - 2\omega - 1$ &  $1$ \\
 & $\omega + 1$  & $-\omega + 1$ &  $2$ \\
 & $\omega^2 + \omega + 1$  &  $-\omega^2 - \omega + 1$  &  $2$ \\
 & $\omega^2 + 2\omega + 1$    & $-\omega^2 - 2\omega + 2$  & $3$ \\
 & $\omega^2 + 2\omega + 2$  &  $-\omega^2 - 2\omega + 2$ &  $4$ \\[5mm]

\midrule

  $g_{1}(x) =  x^3 - x^2 - 1$  &  
$- \omega^2$ & $\omega^2 + 1$ & $1$  \\
($\Delta_K = -31$) & $ \omega^2 - \omega$ & $ -\omega^2 + \omega + 1$ & $1$  \\
& $\omega^2$ & $-\omega^2 + 2$ & $2$  \\
&   $\omega^2 + 1 $  &  $-\omega^2 + 2$ & $3$ \\[5mm]

\midrule

    $x^3 - x^2 - x - 1$  &  
$\omega $  &  $-\omega+ 2$   &  $2$ \\
($\Delta_K = -44$)  &    &    &  \\[5mm]

\bottomrule
\end{tabular}%
\end{table}

\noindent \textbf{Remark}. As with the cyclic cubic case, we note that equivalent sporadic solutions also exist in the complex cubic field $L_{67}$, given that we have $L_{1} = L_{67}$. %

\subsection{Computing $\NK$ for complex cubic fields}
Whilst our proof of Theorem~\ref{thm:maincomplex} gives an explicit classification of all solutions to $\uu + \vv = n$ for units $\uu, \vv \in \co^\times$ in complex cubic fields $K$, in order to explicitly compute the set $\NK$ for some fixed complex cubic field $K$, we must still determine all integers $b$ for which $\omega_b$ lies in $K$.
This therefore requires solving the field isomorphism problem for the family of fields $L_a = \Q(\omega_a)$, for which we can use the following theorem of Hoshi and Miyake:

\begin{theorem}[Hoshi--Miyake {\cite[Theorem~2]{HoshiMiyake}}] \label{thm:hoshimiyake}
Let $n$ and $m$ be two nonzero integers such that the splitting fields of $X^3 + mX + m$ and $X^3 + nX + n$ over $\Q$ coincide.  Then there exists a primitive solution $(x, y) \in \Z^2$ with $y > 0$ to the cubic Thue equation
\begin{equation*}
    x^3 - 2mx^2y - 9mxy^2 - m(2m + 27)y^3 = \lambda
\end{equation*}
for some $\lambda \in \Z$ such that $\lambda^2$ is a divisor of $m^3 (4m + 27)^5$.  In particular, $n$ can be given as
\begin{equation*}
    n = m + \frac{m(4m+27)y(x^2 + 9xy + 27y^2 + my^2)(x^3 - mx^2 y - m^2 y^3)}{(x^3 - 2mx^2y - 9mxy^2 - m(2m + 27)y^3)^2} .
\end{equation*}
    
\end{theorem}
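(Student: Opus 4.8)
The plan is to follow the strategy of Hoshi and Miyake: translate the equality of splitting fields into an isomorphism of the cubic subfields, realise such an isomorphism as a Tschirnhaus transformation between $f_m(X) = X^3 + mX + m$ and $f_n(X) = X^3 + nX + n$, and then read off the conditions on its coefficients as the stated Thue equation.

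\emph{Step 1: reduction to cubic fields.} Write $L_m, L_n$ for the splitting fields and fix roots $\theta_m, \theta_n$. For all but the finitely many explicitly describable pairs — indeed, $X^3+mX+m$ is reducible only for $m \in \{0,-8\}$, and similarly for $n$ — both cubics are irreducible, and I would first show that $L_m = L_n$ if and only if $\Q(\theta_m) \cong \Q(\theta_n)$. When $\Gal(L_m/\Q) \cong S_3$ the three cubic subfields of $L_m$ are the conjugates of $\Q(\theta_m)$; when $\Gal(L_m/\Q) \cong C_3$ one has $\Q(\theta_m) = L_m$. In either case the Galois closure of $\Q(\theta_m)$ inside $\overline{\Q}$ equals $L_m$, and since isomorphic subfields of $\overline{\Q}$ have the same Galois closure, the two conditions are equivalent.

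\emph{Step 2: the Tschirnhaus transformation and the Thue curve.} Assuming $L_m = L_n$, set $K := \Q(\theta)$ with $f_m(\theta) = 0$. By Step 1, $K$ contains a root $\sigma$ of $f_n$, and as $[K:\Q] = 3$ we may write $\sigma = c_0 + c_1\theta + c_2\theta^2$ with $c_i \in \Q$. The requirement that the minimal polynomial of $\sigma$ have the shape $X^3 + nX + n$ amounts to two conditions on $(c_0,c_1,c_2)$: vanishing of the trace $e_1(\sigma) = \mathrm{Tr}_{K/\Q}(\sigma)$, and the equality $e_2(\sigma) = -\,\mathrm{N}_{K/\Q}(\sigma)$ of the $X$-coefficient with the constant term; afterwards $n$ is forced to equal $e_2(\sigma)$. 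Using $\mathrm{Tr}_{K/\Q}(\theta) = 0$ and $\mathrm{Tr}_{K/\Q}(\theta^2) = -2m$ (read off from $f_m$), the first condition gives $c_0 = \tfrac{2m}{3}c_2$, eliminating $c_0$; substituting into the second yields a single cubic relation between $c_1$ and $c_2$ with coefficients polynomial in $m$. The main computational step is to produce the explicit $\mathrm{GL}_2(\Q)$ change of variables carrying this relation to the homogeneous form $x^3 - 2mx^2y - 9mxy^2 - m(2m+27)y^3 = \lambda$; as a consistency check one verifies the identity $\disc\bigl(x^3 - 2mx^2y - 9mxy^2 - m(2m+27)y^3\bigr) = -m^2(4m+27)^3 = (4m+27)^2\,\disc(f_m)$, confirming this binary cubic is the correct Tschirnhaus model. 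Clearing denominators, a primitive integral representative $(x,y)$ of the rational point may be chosen with $y > 0$, and reversing the substitution expresses $n = e_2(\sigma)$ as the displayed rational function of $m, x, y$.

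\emph{Step 3: the divisibility bound.} The remaining — and I expect genuinely delicate — point is $\lambda^2 \mid m^3(4m+27)^5$. Here $\lambda$ is, up to sign, the value of the norm form at the chosen integral point, hence an index-type quantity dividing a power of the discriminant of the relevant cubic order. Since $\disc(f_m) = -m^2(4m+27)$ and $\disc(f_n) = -n^2(4n+27)$ must agree up to rational squares once $\Q(\theta_m) \cong \Q(\theta_n)$, one constrains $4n+27$ modulo squares in terms of $4m+27$; feeding this back, together with a prime-by-prime analysis at the primes dividing $m(4m+27)$ of which primes can occur in the denominators of the $c_i$ and in the leading coefficient, yields the stated divisor after local bookkeeping. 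This accounting — rather than any single hard idea — is the crux, and it is precisely the content of \cite[Theorem~2]{HoshiMiyake}, which we invoke directly rather than reprove.
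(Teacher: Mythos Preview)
The paper does not prove this statement at all: Theorem~\ref{thm:hoshimiyake} is quoted verbatim as \cite[Theorem~2]{HoshiMiyake} and used as a black box in Proposition~\ref{prop:computeNKcomplex}, with no accompanying argument. There is therefore nothing in the paper to compare your proposal against.

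As for the proposal itself, your outline is a reasonable summary of the Hoshi--Miyake method (reduce equality of splitting fields to isomorphism of the cubic subfields, realise the isomorphism by a Tschirnhaus transformation, and read off a cubic Thue relation), and the discriminant sanity check in Step~2 is correct. But note that your Step~3 ends by explicitly invoking \cite[Theorem~2]{HoshiMiyake} rather than carrying out the divisibility analysis, so in the end you are doing exactly what the paper does: citing the result. If the intent was to supply an independent proof, the substance is missing precisely at the point you flag as ``genuinely delicate''; if the intent was merely to motivate the citation, then your write-up is more expository than the paper's bare statement, but it is not a proof and should not be labelled as one.
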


\begin{prop} \label{prop:computeNKcomplex}
For any $a \geq -1$, let $L_a := \Q(\omega_a)$ be the complex cubic field defined by $\omega_a$, where $\omega_a$ is a root of the cubic polynomial $x^3 - ax^2 - 1$.  Then
\begin{equation*}
    \NK = 
    \begin{cases}
           \{1, 2, 3, 4\} & \text{if } K = L_{-1}, \\
           \{1, 2, 3, 67 \}   & \text{if } K = L_1, \\
           \{2, a\}  & \text{if } K = L_a \text{ for some positive } a \leq 1000 \text{ with } a \nin \{1, 67\}, \\
           \{2\} & \text{if } K \neq  L_a \text{ for all } a \geq -1 .
    \end{cases}
\end{equation*}
\end{prop}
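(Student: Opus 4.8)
The plan is to combine Theorem~\ref{thm:maincomplex}, which lists all solutions of $\uu + \vv = n$ over complex cubic fields, with a complete solution of the field isomorphism problem for the family $\{L_a\}$ in the relevant range; the latter is where the theorem of Hoshi--Miyake (Theorem~\ref{thm:hoshimiyake}) is used. The key structural point is that, by Theorem~\ref{thm:maincomplex}, for a complex cubic field $K$ the set $\NK$ equals $\{2\}$ (the trivial solution $1+1$) together with $|b|$ for every $b \geq -1$, $b \neq 0$, with $L_b \cong K$ (the trivial family $\omega_b + (-\omega_b + b)$), together with the finitely many values of $n$ coming from the sporadic solutions of Table~\ref{tab:complexsolutions}; moreover sporadic solutions occur only when $K \in \{ L_{-1}, L_1, \Q[x]/(x^3 - x^2 - x - 1) \}$. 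So everything reduces to determining, for each $K$ under consideration, the set $\{\, b \geq -1 : L_b \cong K \,\}$.

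To handle the isomorphism problem, first note that for $a \neq 0$ the field $L_a$ is generated by $X^3 + a^3 X + a^3$ (rescale a root of the reciprocal polynomial $x^3 + ax - 1$ of $g_a$ by $-a$), and that $L_a$ is a non-Galois cubic field since $\disc(g_a) = -4a^3 - 27 < 0$ for all $a \geq -1$. Hence $L_a \cong L_b$ if and only if the splitting fields of $X^3 + a^3 X + a^3$ and $X^3 + b^3 X + b^3$ over $\Q$ coincide. For each of the finitely many $a \in \{-1\} \cup \{1, 2, \dots, 1000\}$ we then apply Theorem~\ref{thm:hoshimiyake} with $m = a^3$: if $L_b \cong L_a$ for some $b \geq -1$, $b \neq 0$, then $b^3$ equals $a^3$ plus the displayed rational function of a primitive solution $(x,y)\in\Z^2$ with $y > 0$ of one of the finitely many cubic Thue equations
\begin{equation*}
    x^3 - 2a^3 x^2 y - 9a^3 x y^2 - a^3(2a^3 + 27) y^3 = \lambda, \qquad \lambda^2 \mid a^9 (4a^3 + 27)^5 .
\end{equation*}
Solving each of these Thue equations effectively, substituting the solutions into the formula, and retaining only those values which are cubes of integers $\geq -1$, yields a finite list of candidate parameters $b$; a direct check of each candidate (e.g.\ by factoring $x^3 - bx^2 - 1$ over $L_a$) then pins down the isomorphism class of $L_a$ exactly. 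The outcome is that this class is $\{1, 67\}$ for $a \in \{1, 67\}$ and $\{a\}$ for every other $a$ in the range, so that $L_1 = L_{67}$ is the only coincidence and no $L_a$ in this range is isomorphic to an $L_b$ with $b > 1000$.

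Assembling the pieces: since $\disc(g_a) = -4a^3 - 27 \equiv 1 \pmod 4$, it can never equal $-44$ times a square (which would be $\equiv 0 \pmod 4$), so $\Q[x]/(x^3-x^2-x-1)$ (of discriminant $-44$) is not isomorphic to any $L_a$ and has $\NK = \{2\}$ (its only solutions being $1+1=2$ and the sporadic $\omega + (-\omega+2) = 2$); hence $\NK = \{2\}$ for every complex cubic $K$ not isomorphic to an $L_a$. For $K = L_{-1}$ the trivial family gives $n = 1$, the trivial solution gives $n = 2$, and Table~\ref{tab:complexsolutions} gives sporadic values $n \in \{1,2,3,4\}$, so $\NK = \{1,2,3,4\}$. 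For $K = L_1$ (equivalently $L_{67}$) the two trivial families give $n = 1$ and $n = 67$, the trivial solution gives $n = 2$, and the sporadic values are $n \in \{1,2,3\}$, so $\NK = \{1,2,3,67\}$. For $K = L_a$ with $1 \leq a \leq 1000$ and $a \notin \{1,67\}$ there are no sporadic solutions (as $K \notin \{L_{-1}, L_1, \Q[x]/(x^3-x^2-x-1)\}$ by the previous paragraph) and $L_b \cong K$ only for $b = a$, so $\NK = \{2, a\}$. This is exactly the claimed formula.

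The main obstacle is the isomorphism step: one must solve, for every $a$ with $-1 \leq a \leq 1000$, all of the cubic Thue equations produced by Theorem~\ref{thm:hoshimiyake} (whose coefficients grow like $a^6$, with several equations per $a$ indexed by the admissible divisors $\lambda$), and then carry out the bookkeeping and verification of the candidate parameters $b$. This is a finite but computationally substantial task that one carries out with computer algebra; once it is done, the remaining steps are an immediate consequence of Theorem~\ref{thm:maincomplex}.
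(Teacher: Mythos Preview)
Your proposal is correct and follows essentially the same approach as the paper: reduce via Theorem~\ref{thm:maincomplex} to the field isomorphism problem for the family $\{L_a\}$, then apply the Hoshi--Miyake criterion (Theorem~\ref{thm:hoshimiyake}) with $m = a^3$ and solve the resulting Thue equations to determine all $b$ with $L_b \cong L_a$. Your added mod-$4$ discriminant argument excluding $\Q[x]/(x^3-x^2-x-1)$ from the $L_a$ family is a nice detail that the paper leaves implicit.
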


\begin{proof}
    Let $K = L_a$ for some $a = -1, \dots,  1000$. We can compute $\NK$ using our proof of Theorem~\ref{thm:maincomplex}. Firstly, we can easily check which of the $\numcomplexsporadic$ sporadic solutions in Table~\ref{tab:complexsolutions} lie in $K$. Therefore, to compute $\NK$, it thus suffices to determine for which integers $b$ does $\omega_b$ lie in $K$.  

    One can check that the minimal polynomial of $-a/\omega_a$ is $X^3 + a^3 X + a^3$, and thus the Galois closure of $L_a$ is isomorphic to the splitting field of $X^3 + a^3X + a^3$.  We can therefore use Theorem~\ref{thm:hoshimiyake} with $n = a^3$ and $m = b^3$ to compute all such integers $b$ such that $\omega_b \in L_a$.  In particular, we do the following,
    \begin{enumerate}
        \item Using Magma, compute all primitive solutions $(x, y) \in \Z^2$ with $y > 0$ to the cubic Thue equation
        \begin{equation*}
            x^3 - 2a^3x^2y - 9a^3xy^2 - a^3(2a^3 + 27)y^3 = \lambda ,
        \end{equation*}
        for each $\lambda \in \Z$ such that $\lambda^2$ is a divisor of $a^9 (4a^3 + 27)^5$.

        \item For each primitive solution $(x,y)$ obtained, compute the possible value for $b$ as 
        \begin{equation*}
            b = \Big( a^3 + \frac{a^3(4a^3+27)y(x^2 + 9xy + 27y^2 + a^3y^2)(x^3 - a^3x^2 y - a^6 y^3)}{(x^3 - 2a^3x^2y - 9a^3xy^2 - a^3(2a^3 + 27)y^3)^2} \Big)^{1/3} .
        \end{equation*}

        \item If $b$ is an integer and $\omega_b$ lies in $L_a$, then add $b$ to $\NK$.
    \end{enumerate}

    By thus solving the above cubic Thue equations for each $a = -1, \dots, 1000$ using Magma, this gives us the sets $\NK$ as claimed in (\ref{eq:NKcomplex}).
\end{proof}

\begin{example}
There are also families of cubic fields, for which the equation $\varepsilon_1+\varepsilon_2=n$ has a solution for several values of $n$. For example, let $\rho$ be a root of the polynomial $x^3+(l-1)x^2-lx-1$ where $l\geq 3$. The fields $\Q(\rho)$ are called Ennola's cubic fields \cite{Ennola}. Then, except for the trivial solutions for $n=0,2$ and $\rho+1-\rho=1$, for $n>1$, we have at least the following four solutions:
\begin{align*}
(-\rho)+(l+\rho)&=l,\\
(1-\rho)+(l+\rho)&=l+1,\\
(l\rho+\rho^2)+(l+2-l\rho-\rho^2)&=l+2,\\
(1+l\rho+\rho^2)+(l+2-l\rho+\rho^2)&=l+3.
\end{align*}
We see that the situation in these fields differs from the simplest cubic fields, where, in almost all cases, we can find a solution only for $n=\pm2,\pm1, 0$.
\end{example}

\section{Further problems} \label{sec:openproblems}

Whilst Theorem~\ref{thm:NKfinite} proves that $\NK$ is finite for number fields $K$ not containing a real quadratic subfield, there are still many open problems one could ask regarding both the quantitative and qualitative behaviour of $\NK$.  In a similar spirit to the conjectures of Vukusic--Ziegler \cite{VZ}, we therefore pose the following open problems:

\begin{itemize}

    \item Given a fixed positive integer $d \geq 1$, what is the average value of $|\NK|$ over all degree $d$ number fields $K$ 
    not containing a real quadratic subfield?

    \item Given a fixed positive integer $d \geq 1$, what is the \emph{maximum} of $|\NK|$ over all degree $d$ number fields $K$ 
    not containing a real quadratic subfield?\footnote{If we let $n_d$ be the maximum of $|\NK|$ over all degree $d$ number fields $K$ not containing a real quadratic subfield, then a quick computational search gives the lower bounds $n_3 \geq 7$, $n_4 \geq 8$, and $n_5 \geq 11$.}

    \item Given a fixed positive integer $d \geq 1$, what is the \emph{minimum} of $|\NK|$ over all degree $d$ number fields $K$?

    \item Does there exist an effective algorithm that accepts as input, a number field $K$ such that $\NK$ is finite, and outputs explicitly the set $\NK$?

    \item Let $\mathcal{N}_{K,m}$ denote the set of positive integers $n \in \Z$ such that $n$ can be represented as the sum of $m$ units in $\co^\times$.  For which number fields is $\mathcal{N}_{K,m}$ finite? If $\mathcal{N}_{K,m}$ is finite, can $\mathcal{N}_{K,m}$ be effectively computed?

\end{itemize}

\bibliographystyle{alpha}  
\bibliography{bib}

\end{document}